\theoremstyle{plain}
\newtheorem{theorem}{Theorem}
\newtheorem{lemma}{Lemma}
\newtheorem{remark}{Remark}
\newcommand{\zap}[1]{}
\newcommand{\length}{\mathrm{length}}
\newcommand{\Rb}{\mathbb{R}}
\newcommand{\Eb}{\mathbb{E}}
\newcommand{\av}{\mathbf{a}}
\newcommand{\Nv}{\mathbf{N}}
\newcommand{\betav}{\pmb{\beta}}
\newcommand{\lambdav}{\pmb{\lambda}}
\newcommand{\ev}{\mathbf{e}}
\newcommand{\Pv}{\mathbf{P}}
\newcommand{\Fcal}{\mathbf{F}}
\newcommand{\allp}{\pmb{\pi}}
\newcommand{\allpset}{\mathbf{\Pi}}
\newcommand{\allstates}{\mathbb{S}^K}
\newcommand{\allstate}{\mathbf{s}}
\newcommand{\allstater}{\mathbf{S}}
\newcommand{\allactions}{\mathbb{A}^K}
\newcommand{\allaction}{\av}
\newcommand{\allar}{\mathbf{A}}
\newcommand{\allpr}{\mathbb{P}}
\newcommand{\allr}{R}
\newcommand{\subp}{\pi}
\newcommand{\subpset}{\Pi}
\newcommand{\subr}{r}
\newcommand{\substates}{\mathbb{S}}
\newcommand{\substater}{S}
\newcommand{\substate}{s}
\newcommand{\subactions}{\mathbb{A}}
\newcommand{\subar}{A}
\newcommand{\subpr}{P}
\newcommand{\subaction}{a}
\newcommand{\ind}[1]{\mathbbm{1}(#1)}
\numberwithin{equation}{section}
\theoremstyle{plain}
\begin{document}

\begin{frontmatter}
\title{An Asymptotically Optimal Index Policy\\ for Finite-Horizon Restless Bandits}
%\runtitle{A Sample Document}
%\thankstext{T1}{Footnote to the title with the ``thankstext'' command.}

\begin{aug}
\author{\fnms{Weici Hu} \ead[label=e1]{wh343@cornell.edu}},
\author{\fnms{Peter I. Frazier} \ead[label=e2]{pf98@cornell.edu}}
\affiliation{Operations Research and Information Engineering Department\\ Cornell University}

\address{Operations Research and Information Engineering department, Cornell University\\
\printead{e1}\\
\phantom{E-mail:\ }\printead*{e2}}

\end{aug}

\begin{abstract}
We consider restless multi-armed bandit (RMAB) with a finite horizon and multiple pulls per period. Leveraging the Lagrangian relaxation, we approximate the problem with a collection of single arm problems. We then propose an index-based policy that uses optimal solutions of the single arm problems to index individual arms, and offer a proof that it is asymptotically optimal as the number of arms tends to infinity. We also use simulation to show that this index-based policy performs better than the state-of-art heuristics in various problem settings.
\end{abstract}

\
\begin{keyword}
\kwd{Restless Bandits}
\kwd[; ]{Constrained Control Process}
\end{keyword}

\end{frontmatter}

\section{Introduction}\label{intro}
We consider the restless multiarmed bandit (RMAB) problem \citep{whittle1988} with a finite horizon and multiple pulls per period.  In the RMAB, we have a collection of ``arms'', each of which is endowed with a state that evolves independently.  If the arm is ``pulled'' or ``engaged' in a time period then it advances stochastically according to one transition kernel, and if not then it advances according to a different kernel.  Rewards are generated with each transition, and our goal is to maximize the expected total reward over a finite horizon, subject to a constraint on the number of arms pulled in each time period.

The RMAB generalizes the multi-armed bandit (MAB) \citep{robbins195x} by allowing arms that are not engaged to change state and multiple pulls per period. 
This extends the applicability of the MAB problem to a broader range of settings, including the submarine tracking problem, project assignment problem in \citep{whittle1988}, 
and contemporary applications including:
\begin{itemize}
\item Facebook displays ads in the \textit{suggested posts} section every time its users browse their personal pages. Among the ads that have been shown, some are known to attract more clicks than others. But there are also many ads which have yet to be shown and they may attract even more clicks. Given that the slots for display are limited, a policy is required to select ads to maximize total clicks.
\item In a multi-stage clinical trial, a medical group starts with a number of new treatments and an existing treatment with reliable performance. In each stage, a few treatments are selected from the pool to test, with the goal to identify the new treatments that perform better than the existing one with high confidence. A strategy is required to select which treatments to test at every stage to most effectively support their judgment at the end of the trial. 
\item A data analyst wishes to label a large number of images using crowdsourced effort from low-cost but potentially inaccurate workers. Each label given by the crowdworkers comes with a cost and the analyst has limited budget. Hence she needs to carefully assign tasks so as to maximize the likelihood of correct labeling.
\end{itemize}

The infinite horizon MAB with one pull per time period is famously known to have a tractable-to-compute optimal policy, called the Gittins index policy \citep{gittins1979}.  This policy is appealing because it can be computed by considering the state space for only a single arm, making it computationally tractable for problems with many arms.  This policy loses its optimality properties, however, when modifying the problem in any problem dimension: when allowing arms that are not engaged to change state; when moving to a finite horizon \citep{berry1985}; or when allowing multiple pulls per period.  Thus, the Gittins index does not apply to our problem setting.  Moreover, while optimal policies for RMABs with multiple pulls per period or finite horizons are characterized by the dynamic programming equations \citep{putermanBook}, the so-called `curse of dimensionality' \citep{PowellBook} prevents computing them because the dimension of the state space grows linearly with the number of arms.

Thus, while the RMAB is not known to have a computable optimal policy, \cite{whittle1988} proposed a heuristic called the Whittle index for the infinite-horizon RMAB with multiple pulls per period, which is well-defined when arms satisfy an indexability condition.  This policy is derived by considering a Lagrangian relaxtion of the RMAB in which the constraint on the number of arms pulled is replaced by a penalty paid for pulling an arm.  An arm's Whittle index is then the penalty that makes a rational player indifferent between pulling and not pulling that arm.  The Whittle index policy then pulls those arms with the highest Whittle indices.  Appealingly, the Whittle index and the Gittins index are identical when applied to the MAB problem with a single pull per period.  

\cite{whittle1988} further conjectured that if the number of arms and the number of pulls in each time period go to infinity at the same rate in an infinite-horizon RMAB, then the Whittle index policy is asymptotically optimal when arms are indexable. \cite{weber1990,weber1991} gave a proof to Whittle's conjecture with a difficult-to-verify condition: that the fluid approximation has a globally asymptotically stable equilibrium point.  This condition was shown to hold when each arm's state space has at most $3$ states, but this condition does not hold in general and \cite{weber1990} provides a counterexample with $4$ states.

Our contribution in this paper is to (1) create an index policy for finite horizon RMABs with multiple pulls per period, and (2) show that it is asymptotically optimal in the same limit considered by Whittle.  Like the Whittle index, our approach is computationally appealing because it requires considering the state space for only a single arm, and its computational complexity does not grow with the number of arms.  Unlike the Whitle index, our index policy does not require an indexability condition hold to be well-defined, and in contrast with \cite{weber1990,weber1991} our proof of asymptotic optimality holds regardless of the number of states.  We further demonstrate our index policy numerically on problems from the literature that can be formulated as finite-horizon RMABs, and show that it provides finite-sample performance that improves over the state-of-the-art. 

In addition to building on \cite{whittle1988,weber1990,weber1991}, our work builds on the literature in weakly coupled dynamic programs (WCDP), that itself builds on RMABs.
Indeed, at the end of his paper, Whittle pointed out that his relaxation technique can be applied to a more general class of problems in which sub-problems are linked by constraints on actions, but are otherwise independent. Hawkins in his thesis \citep{Hawkins2003} formally termed these problems (but with a more general type of constraints) as WCDPs and proposed a general decoupling technique. Moreover, he also proposed index-based policies for solving both finite and infinite horizon WCDPs and offered a proof that his policy, when applied to the infinite time horizon Multi-arm bandit problem (MAB), is equivalent to the Gittins index policy. Our work is similar to Hawkins' in that we consider Lagrange multipliers of the same functional form when computing indices. However, Hawkins does not specify what the coefficients of the function should be, or give a tie-breaking rule for the case when multiple arms have the same index. We obtain an asymptotically optimal policy by addressing both of these issues. The differences will be discussed with greater details after we formally introduce our index policy.

Another major work in WCDP is by \cite{adel2008} who shows that the ADP relaxation is tighter than the Lagrangian relaxation but is also computationally more expensive. It gives necessary and sufficient conditions for the Lagrangian relaxation to be tight and proves that the optimality gap is bounded by a constant when the Lagrange multipliers are allowed to be state dependent. The last result that the optimality gap is bounded by a constant implies that the per arm gap goes to zero as the number of arms grows. We achieve a similar result in our paper by showing the per arm reward of our index-based heuristic policy goes to the per arm reward of the Lagrangian bound, in spite of that our Lagrange multipliers are not state-dependent. We conjecture that this is due to the fact that our constraints, which is a function about on the action and not the state, is less general than the constraints considered in WCDP, which depends on both the action and the state. Moreover, the focuses of the two works differ: while our work focuses on offering an asymptotically optimal heuristic policy, \cite{adel2008} examines the ordering and tightness of different bounds. The heuristic proposed in \cite{adel2008} is based on ADP technique, is also different from our index-based policy.

Other work on WCDP also include \cite{Ye2014} who proposes a even tighter bound by incorporating information relaxation on the non-anticipative constraints in addition to the existing relaxation methods. \cite{Gocgun2011} considers two classes of large-scaled WCDPs in which the state and action space in each sub-problem also grows exponentially and uses an ADP technique to approximate the value functions of individual sub-MDPs in addition to employing Lagrangian relaxation for the overall problem. 

The remainder of this paper is outlined as follows: Section \ref{prob_desc} formulates the problem. Section \ref{sec:up} discusses the Lagrangian relaxation of the problem. Section \ref{sec:alg} states our index-based policy, and provide computation methods. Section \ref{sec:pf} gives a proof of asymptotic optimality. Section \ref{sec:num} numerically evaluates our index policy. Section 8 concludes the paper.

\section{Problem Description and Notation}\label{prob_desc}
We consider an MDP \\ $(\allstates,\allactions,\allpr^{\cdot},\allr)$ which is created by a collection of $K$ sub-processes $(\substates,\subactions,\subpr^{\cdot},\subr)$. The sub-processes are independent of each other except that they are bound by the joint decisions on their actions at each time step. These sub-processes are also referred to as {\it arms} in the bandit literature and shall be indexed by $x\in\{1,...,K\}$. 
Following a standard construction for MDPs, 
both the larger joint MDP and the sub-processes will be constructed on the same measurable space $(\Omega,\Fcal)$.
Random variables on this measurable space will correspond to states, actions, rewards,
and each policy will induce a probability measure over this space.

We describe the MDP to consider formally as follows:

\begin{itemize}
\item The \textbf{time horizon} $T<\infty$.

\item The \textbf{state space} $\allstates$ is the cross product of $K$ sub-process state space $\substates$. $\substates$ is assumed to be finite. We use $\allstate=(s_1,...,s_K)$ to denote an element in $\allstates$ and $\allstater$ when the state is random. We also use $\allstater_t$ to emphasize that the state is at time $t$. 
Likewise, we use $s$ to denote an element in $\substates$ and $S$ or $S_t$ when it is random. 

\item The \textbf{action space} $\allactions$ is the cross product of $K$ sub-processes action space $\subactions$, which is set equal to $\{0,1\}$. We use $\subaction$ to denote a generic element of $\subactions$, and $\subar$ when it is random. We use $\allaction=(\subaction_1,\subaction_2,...,\subaction_K)$ to denote a generic element in $\allactions$ and $\allar$ to denote an action when it is random. In the context of bandit problems, $\subaction=1$ is called ``pulling'' an arm (sub-process).
\item The \textbf{reward function} $\allr_t:\allstates \times \allactions\mapsto \mathbb{R}$ for each $1\leq t\leq T$. $\allr_t(\allstate,\allaction) = \sum_{x=1}^K \subr_t(\substate_x,\subaction_x)$, where  $\subr_t(\substate_x,\subaction_x)$ is the reward obtained by a sub-process when action $\subaction_x$ is taken in state $\substate_x$ at time $t$. We assume rewards are non-negative and finite. 

\item The \textbf{transition kernel} $\allpr^{\allaction}(\allstate',\allstate) = \prod_{x=1}^{K}\subpr^{\subaction_x}(\substate'_x,\substate_x)$, where $\subpr^{\subaction}(\substate',\substate)$ is the probability of a sub-process transitioning from $\substate'$ to $\substate$ if action $\subaction$ is taken, i.e., $P(\substate|\substate',\subaction)$. The product implies that the $K$ sub-processes evolve independently. We also point out that RMAB differ from MAB problems in that MABs require $\subpr^0(\substate,\substate)=1$ while RMABs allows $\subpr^0(\substate,\substate)<1$. Since we are considering both cases, we do not restrict the value of $\subpr^0(\substate,\substate)$.
\end{itemize}

%We restrict our attention to Markov policies. . We follow convention by defining a policy $\allp$ as a function that determines the probability of taking action $\allaction$ in state $\allstate$, which we denote using $\allp(\allaction|\allstate)$. 
%Moreover, we use $\allaction^{\allp}(\allstate)$ to denote the action chosen in state $\allstate$ under policy $\allp$. 
%Let $\allpset$ denote the set of all feasible policies for the MDP of our focus, that is, %$\allpset=\{\allp:|\av^{\allp}(\allstate)|=m, \forall \allstate \in \Sb\}$. 
%$\allpset=\{\allp:\allp(\allaction|\allstate)=0, \forall \allstate \in \Sb \text{ if } |\allaction|\neq m\}$
%We define a policy $\allp$ for the sub-process $x$ similarly. Let $a^{\allp}(s_{t,x})$ denote the action taken in state $s_{t,x}$ under policy $\allp$, and let $\allpset$ denote the set of all policies for sub-process $x$.
 
%\hwccomment{To be introduced later: We call a policy a deterministic policy when it specifies which action to choose in state $\allstate_t$, i.e., $\av^{\allp}(\allstate_t)$ is a constant instead of a random variable. Let $\subpset_D$ denote the set of all deterministic policies for our MDP. We have $\subpset_D \subseteq \subpset$.}

Next we describe the set of policies for our MDP problem. Since the state and action space defined above are finite, it is sufficient to consider the set of Markov policies $\allpset$ \citep{putermanBook}. Define a policy $\allp\in\allpset$ as a function $\allstates\times \allactions \times \{1,...,T\} \rightarrow [0,1]$ that determines the probability of choosing action $\allaction$ in state $\allstate$ at time $t$, that is, $P(\allaction|\allstater_t=\allstate)$,  Subsequently we have $\sum_{\allaction\in\allactions}\allp(\allstate,\allaction,t)=1$, $\forall \allstate\in\allstates, \forall 1\leq t\leq T$. A policy $\allp$ and the transition kernel  $\allpr^{\cdot}(\cdot,\cdot)$ together defines a probability distribution $P^{\allp}$on the all possible paths of the process $\{\allstate_1\allaction_1...\allstate_T:\allstate_t\in \allstates, \allaction_t\in \allactions\}$. Starting at a fixed state $\allstate_1$, i.e., $P^{\allp}(\allstater_1=\allstate_1)=1$, we have the conditional distributions of $\allstater_t$ and $\allar_t$ defined recursively by $P^{\allp}(\allstater_{t+1}=\allstate'|\allstater_t=\allstate,\allar_t=\allaction)=\allpr^{\allaction}(\allstate,\allstate')$ and $P^{\allp}(\allar_t=\allaction|\allstater_t=\allstate)=\allp(\allstate,\allaction,t)$. 

The MDP we are considering allows exactly $m$ sub-processes to be set active at each time step. Hence a feasible policy, $\allp\in\allpset$, has to satisfy that $P^{\allp}(|\allar_t|=m)=1$, $\forall t\in\{1,...,T\}$. Here we use $|\cdot|$ as an operator that sums all the elements in a vector.  

The objective of our MDP is as follows:
\begin{equation}\label{prime}
\begin{aligned}
& \underset{\allp\in\allpset}{\text{maximize}}
& & \mathbb{E}^{\allp}\left[\sum_{t=1}^{T}\allr_t\big(\allstater_t,\allar_t\big)\right] \\
& \text{subject to}
& & P^{\allp}(|\allar_t|=m)=1, \hspace{3mm}\forall 1\leq t\leq T .
\end{aligned}
\end{equation}
Since we will discuss other MDPs in the process of solving this one,  \eqref{prime} will be referred to as the \textit{original} MDP in the rest of the paper to avoid confusion.  
For convenience, we summarizes our notations in Appendix \ref{ap:notations}. We note the original MDP \eqref{prime} suffers from the 'curse of dimensionality', and hence solving it is computationally intractable. In the remaining of the paper we seek to building a computationally feasible index-based heuristics with performance guarantee.

%\begin{table}

%\end{table}

%\hwccomment{To revisit: Denote the optimal value of $\mathbf{Q}$ as $Opt(K,\alpha)$, where $\alpha = \frac{m}{K}$. Although $Opt(\cdot)$ is a function of far more many variables, here we only explicitly write out $K,\alpha$ as these two play a role in the asymptotic optimality}

\section{Lagrangian Relaxation and Upper Bounds}\label{sec:up}
In this section we discuss the Lagrangian relaxation of the original MDP and the corresponding single process problems. These single process problems together with the Lagrange multipliers form the building block of our index-based policy,  which will be formally introduced in Section \ref{sec:alg}. Lagrangian relaxation removes the binding constraints and allows the problem to be decomposed into tractable MDPs \citep{adel2008}. It works as follows:
for any $\lambdav = \{\lambda_1,...,\lambda_T\} \in \Rb^{T}$, we consider an unconstrained problem whose objective is obtained by augmenting the objective in \eqref{prime}:
 \begin{equation}\label{ub}
 P(\lambdav)=\max_{\allp\in \allpset}\mathbb{E}^{\allp}\left[\sum_{t=1}^{T}\allr_t\big(\allstater_t,\allar_t\big)\right]-\mathbb{E}^{\allp}\left[\sum_{t=1}^T\lambda_t(|\allar_t|-m) \right].
 \end{equation} 
 % The new action space becomes $\bar{A} = \{\av = (a_1,...,a_k):\subaction\in \subactions\}$ which means we are allowed to choose $a=1$ for any number of subproblems. 
  This unconstrained problem forms the \textit{Lagrangian relaxation} of \eqref{prime}, and has the following property:
 \begin{lemma}\label{th:up}
For any $\lambdav\in\Rb^{T}$, $P(\lambdav)$ is an upper bound to the optimal value of the original MDP.
 \end{lemma}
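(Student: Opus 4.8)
The plan is to invoke the standard weak-duality argument for Lagrangian relaxations, specialized to this finite-horizon MDP: I would show that the augmented objective in \eqref{ub} agrees with the original objective in \eqref{prime} on every \emph{feasible} policy, and then use that $P(\lambdav)$ maximizes over a strictly larger set of policies.

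First, fix an arbitrary feasible policy $\allp\in\allpset$ for \eqref{prime}, i.e.\ one satisfying $P^{\allp}(|\allar_t|=m)=1$ for every $1\leq t\leq T$. The key observation is that under such a policy $|\allar_t|-m=0$ almost surely for each $t$, so each summand of the penalty vanishes pointwise and hence
\[
\mathbb{E}^{\allp}\left[\sum_{t=1}^T\lambda_t\big(|\allar_t|-m\big)\right]=0.
\]
It follows that for every feasible $\allp$ the augmented objective of \eqref{ub} coincides exactly with the original objective of \eqref{prime}.

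Next, I would observe that the feasible set of \eqref{prime} is contained in $\allpset$, the set over which the maximum defining $P(\lambdav)$ is taken. Therefore the augmented objective evaluated at any feasible $\allp$ is at most $P(\lambdav)$, and combining this with the previous identity gives
\[
\mathbb{E}^{\allp}\left[\sum_{t=1}^{T}\allr_t\big(\allstater_t,\allar_t\big)\right]\leq P(\lambdav)
\]
for every feasible $\allp$. Taking the supremum of the left-hand side over all feasible policies produces the optimal value of the original MDP while preserving the inequality, which is the claim.

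There is no substantive obstacle here: this is the finite-horizon analogue of weak Lagrangian duality. The only point requiring care is that the penalty term vanishes because feasibility forces $|\allar_t|=m$ with probability one, so each term is individually zero rather than merely zero in expectation through cancellation across $t$. One should also note that the optimal value of \eqref{prime} is well-defined (the supremum attained) because the state and action spaces are finite and Markov policies suffice, as established in the problem description.
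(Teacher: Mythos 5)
Your proof is correct and takes essentially the same weak-duality approach as the paper: on feasible policies the penalty term vanishes, and $P(\lambdav)$ maximizes over a superset of the feasible set, so the bound follows. The only (minor) difference is that the paper inserts an intermediate set $\allpset_E$ of policies satisfying $\mathbb{E}^{\allp}[|\allar_t|]=m$, chaining $\allpset_P\subseteq\allpset_E\subseteq\allpset$ so that the penalty vanishes in expectation rather than pointwise --- which incidentally shows $P(\lambdav)$ also bounds the expectation-relaxed problem --- whereas your direct one-step version suffices for the lemma as stated.
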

 \cite{adel2008} gave a proof to Lemma \ref{th:up} using the Bellman equations. We provide a more straightforward proof by viewing $P(\lambdav)$ as the Lagrange dual function of a relaxed problem of the original MDP; see Appendix \ref{ap:up}. 
 
This Lagrangian relaxation then decomposes into $K$ smaller MDPs, which we can easily solve to optimality. To elaborate on this idea of decomposition, we construct a \textit{sub-MDP} problem based on tuple $(\substates,\subactions,\subpr^{\cdot}(\cdot,\cdot),\subr(\cdot,\cdot))$. Again we consider only the set of Markov policies, $\subpset$, for this problem. Similarly a policy $\subp\in\subpset$ is a function that determines the probability of choosing action $\subaction$ in state $\substate$ at time $t$, i.e., $\subp:\substates\times \subactions \times \{1,...,T\} \rightarrow [0,1]$. 
The sub-MDP starts at a fixed state $s_1$. Subsequently we can define distributions of $\substater_t$ and $\subar_t$ under $P^{\subp}$ in a similar manner as we did for $\allstater_t$ and $\allar_t$ in the previous section. The objective of the sub-MDP is defined as follows:
\begin{equation}\label{dpx}
Q(\lambdav)=\max_{\subp\in \subpset}\mathbb{E}^{\subp}\left[\sum_{t=1}^{T}\subr_t(\substater_t,\subar_t)-\lambda_t \subar_t\right].
\end{equation}

We are now ready to present the decomposition of the Lagrangian relaxation.
\begin{lemma}\label{th:decom}
The optimal value of the relaxed problem satisfies 
\begin{equation}\label{dec}
\Pv(\lambdav)=K Q(\lambdav) + m\sum_t\lambda_t,
\end{equation}
\end{lemma}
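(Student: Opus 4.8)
The plan is to realize the relaxed problem \eqref{ub} as an unconstrained MDP on the joint state space and to solve its finite-horizon Bellman recursion explicitly, showing that the value function separates across arms up to a deterministic offset. First I would substitute the additive forms $\allr_t(\allstate,\allaction)=\sum_{x=1}^K\subr_t(\substate_x,\subaction_x)$ and $|\allaction|=\sum_{x=1}^K\subaction_x$ into the augmented objective, so that the per-period ``modified reward'' becomes
\[
\tilde{r}_t(\allstate,\allaction)=\sum_{x=1}^K\bigl(\subr_t(\substate_x,\subaction_x)-\lambda_t\subaction_x\bigr)+m\lambda_t,
\]
a sum of per-arm terms plus a deterministic constant. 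The key point is that, since the binding constraint $|\allar_t|=m$ has been relaxed away, the maximization in \eqref{ub} is now over \emph{all} Markov policies in $\allpset$, with the action of each arm chosen freely and independently of the others.

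Next I would set up the joint Bellman recursion with $V_{T+1}\equiv 0$ and
\[
V_t(\allstate)=\max_{\allaction\in\allactions}\Bigl[\tilde r_t(\allstate,\allaction)+\sum_{\allstate'}\prod_{x=1}^K\subpr^{\subaction_x}(\substate_x,\substate'_x)\,V_{t+1}(\allstate')\Bigr],
\]
and prove by backward induction the ansatz $V_t(\allstate)=\sum_{x=1}^K q_t(\substate_x)+m\sum_{s\ge t}\lambda_s$, where $q_t$ is the sub-MDP value function from the analogous backward recursion with per-period reward $\subr_t(\substate,\subaction)-\lambda_t\subaction$ and $q_{T+1}\equiv 0$. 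The inductive step rests on two structural facts: the product kernel marginalizes arm by arm, so that $\sum_{\allstate'}\prod_x\subpr^{\subaction_x}(\substate_x,\substate'_x)\sum_y q_{t+1}(\substate'_y)=\sum_y\sum_{\substate'_y}\subpr^{\subaction_y}(\substate_y,\substate'_y)q_{t+1}(\substate'_y)$, since for each $y$ the remaining factors sum to one; and the expression inside the $\max$ is then a sum of $K$ terms, each depending on a single coordinate $\subaction_x$, so the joint maximization over $\allaction\in\{0,1\}^K$ separates into $K$ independent single-arm maximizations. Carrying the constant $m\lambda_t$ through the recursion yields the additive offset $m\sum_{s\ge t}\lambda_s$.

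Finally, evaluating the ansatz at $t=1$ and the joint initial state gives $P(\lambdav)=V_1(\allstate_1)=\sum_{x=1}^K q_1(\substate_1)+m\sum_t\lambda_t$; because the arms are homogeneous and share the common start state $\substate_1$, each summand equals the optimal sub-MDP value $Q(\lambdav)=q_1(\substate_1)$ from \eqref{dpx}, producing $P(\lambdav)=KQ(\lambdav)+m\sum_t\lambda_t$ as claimed.

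I expect the main obstacle to be justifying rigorously the interchange $\max_{\allaction}\sum_x(\cdots)=\sum_x\max_{\subaction_x}(\cdots)$: this is exactly where removing the coupling constraint is essential, since under the original constraint $|\allaction|=m$ the per-arm maximizations could not be performed independently, and the separation would fail. A secondary point requiring care is the homogeneity bookkeeping underlying the clean factor $K$ in front of $Q(\lambdav)$, which presumes all arms share the transition kernel, reward, and initial state; I would state this explicitly and note that the per-arm value functions $q_t$ then coincide across arms. Alternatively, the same decomposition can be obtained by a direct policy argument---bounding each arm's contribution under an arbitrary joint policy by $Q(\lambdav)$ and matching it with the product of the optimal single-arm policy---but the Bellman route makes the separability and the additive offset most transparent.
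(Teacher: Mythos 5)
Your proof is correct, but it takes a genuinely different route from the paper's. The paper (Appendix~C) argues directly at the level of policies and expectations: it pulls the constant $m\sum_{t}\lambda_t$ out by linearity, expands $\allr_t$ and $|\allar_t|$ into per-arm sums, and then exchanges the maximum over joint policies in $\allpset$ with the sum over arms, justifying the exchange by the independence of the sub-processes. You instead realize the relaxation as an unconstrained MDP on the joint state space and prove by backward induction the separable ansatz $V_t(\allstate)=\sum_{x=1}^K q_t(\substate_x)+m\sum_{u\geq t}\lambda_u$, so that the max--sum interchange happens inside the one-stage Bellman operator, where it is elementary: a maximum over the product action space $\{0,1\}^K$ of a sum of terms, each depending on a distinct coordinate of $\allaction$, equals the sum of coordinate-wise maxima. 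This is closer in spirit to the Bellman-equation style of proof the paper attributes to \cite{adel2008} and deliberately departs from. Comparing the two: the paper's argument is shorter, but its final equality --- that the maximum over joint Markov policies of the summed per-arm rewards equals the sum of single-arm optima --- compresses a nontrivial direction, since under an arbitrary joint Markov policy the action applied to arm $x$ may depend on the states of the \emph{other} arms, and one must argue that such externally randomized per-arm behavior cannot beat the optimal single-arm Markov policy; your dynamic-programming route certifies optimality over all policies automatically and so avoids this subtlety. Both proofs rely on the arms being homogeneous and sharing the initial state $s_1$ to obtain the clean factor $K$; you flag this assumption explicitly, which the paper leaves implicit.
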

\cite{adel2008} also gave a proof to Lemma 2, and we again provide a different proof in Appendix \ref{ap:decom}.
Since the state space of the sub-MDP is much smaller, we can solve it directly by using backward induction on optimality equations. 
The existence of such an optimal Markov deterministic policy is implied by that the state and action spaces of the sub-MDP being finite \citep{putermanBook}. Let $\subpset^*(\lambdav)$ be the set of optimal Markov deterministic policies of the sub-MDP for a given $\lambdav$. The relaxed problem can be solved by combining the solutions of individual sub-MDPs, that is, we can construct an optimal policy of the relaxed problem $\allp^{\lambdav}$ by setting $\allp^{\lambdav}(\allstate,\allaction,t) = \prod_{x=1}^K\pi^{\lambdav}(\substate_x,\subaction_x,t)$, where $\subp^{\lambdav}$ is an element in $\subpset^*(\lambdav)$. 
Moreover, $\mathbf{P}(\lambdav)$ is convex and piecewise linear in $\lambdav$ \citep{adel2008}. 
%A tightest upper bound of the kind is attained by $\inf_{\lambdav}\Pv(\lambdav)$. 
%Moreover, $\arg\inf_{\lambdav}\mathbf{P}(\lambdav)$ is non-empty in our problem setting; see Appendix \ref{ap:nonemp}.
%Let $\lambdav^* \in \arg\inf_{\lambdav}\mathbf{P}(\lambdav)$, and $\allp^{\lambdav^*}$ be a deterministic optimal policy that attains $\Pv(\lambdav^*)$. Let $\subp^{\lambdav^*}$ be the sub-MDP deterministic optimal policies from which $\allp^{\lambdav^*}$ is derived.

\section{An Index Based Heuristic Policy}\label{sec:alg}
Our index based heuristic policy assigns an index to each sub-process, based upon its state and current time. At each time step, we set active the m sub-processes with the highest indices. Before carrying out the process of sequential decision-making, our index policy calls for pre-computation of 1) $\lambdav^*\in \arg\inf_{\lambdav}P(\lambdav)$, as defined in Section \ref{sec:up}; 2) a set of indices, $\betav$, that will later be used for decision-making at every time step; 3) an optimal policy $\subp^{**}$ for the sub-MDP problem in \eqref{dpx}. In the first part of this section we discuss how we carry out such computations. 
\subsection{Pre-computations}
\subsubsection{Dual optimal $\lambdav^*$}\label{subsec:pi}
We use subgradient descent to solve $\inf_{\lambdav} P(\lambdav)$, which converges to its solution $\lambdav^*$ by convexity of $\lambdav \mapsto P(\lambdav)$ (Theorem 7.4  in \citep{RusBook2006}).  By \eqref{dpx} and $\eqref{dec}$, a sub-gradient of $P(\lambdav)$ with respect to $\lambdav$ is given by $(-K\mathbb{E}^{\subp^{\lambdav}}[A_t]+m: 1\leq t\leq T)$, where $\subp^{\lambdav}$ is any policy in $\Pi^*(\lambdav)$.

To compute this sub-gradient, we compute a policy in $\Pi^*(\lambdav)$ and then use exact computation or simulation with a large number of replications to compute $\mathbb{E}^{\subp^{\lambdav}}[A_t]$.
To compute a policy in $\Pi^*(\lambdav)$, we first compute the value function $V^{\lambdav}:\substates\times\{1,...,T\}\mapsto \mathbb{R}$ of sub-MDP $Q(\lambdav)$.  We accomplish this using backward induction \citep{putermanBook}:
\begin{equation}
V^{\lambdav}(s,t)=
\begin{cases}
	\max_{\subaction\in \subactions}\{r_T(s,a)-a\lambda_T\}  & \text{if $t=T$,}\\
	\max_{\subaction\in\subactions}\{r_t(s,a)-a\lambda_t+\sum_{s'\in\substates}P^{a}(s,s')V^{\lambdav}(s',t+1)\}  & \text{otherwise.}
\end{cases} 
\end{equation}

Then, any and all policies $\pi^{\lambdav}$ in $\Pi^*(\lambdav)$ are constructed by determining for each $s$ and $t$ the action $a$ whose one-step lookahead value $r_t(s,a)-a\lambda_t+\sum_{s'\in\substates}P^{a}(s,s')V^{\lambdav}(s',t+1)$ is equal to $V^{\lambdav}(s,t)$, and then setting $\pi^{\lambdav}(s,a,t)=1$ for this $a$.
For those $s$ and $t$ for which both actions $a$ have one-step lookahead values equal to $V^{\lambdav}(s,t)$, one may set $\pi^{\lambdav}(s,a,t)=1$ for either such action.
Thus, the cardinality of $\Pi^*(\lambdav)$ is 2 raised to the power of the number of $s,t$ for which the one-step lookahead values for playing and not playing are tied.

When we construct a policy in $\Pi^*(\lambdav)$ for the purpose of computing a sub-gradient of $P(\lambdav)$, we choose to play in those $s,t$ with tied one-step lookahead values. 
While our subgradient descent algorithm would converge for other choices, making this choice better supports computation of indices in section~\ref{subsec:beta}, 

\subsubsection{Indices $\beta_t(s)$}
\label{subsec:beta}
Define vector $\mathbf{v}[a,t]$ to be $\mathbf{v}+(a-v_t)*\mathbf{e}_t$, that is, the vector $\mathbf{v}$ with the $t^{th}$ element replaced by $a\in\mathbb{R}$. 
We define the \textit{index} of state $\substate\in\substates$ at time $t$ as
\begin{equation}\label{df:index}
\beta_t(\substate) = \sup\{\beta: \exists \; \subp\in\subpset^*(\lambdav^*[\beta,t]) \;s.t.\; \subp(\substate,1,t)=1\}.
\end{equation} 
Instead of computing the entire set $\subpset^*(\lambdav^*[\beta,t])$, we only need to compute a policy in $\subpset^*(\lambdav^*[\beta,t])$ using the method discussed in section \ref{subsec:pi}, i.e., always choose the active action when there are ties. Intuitively, this index is the maximum price we are willing to pay to set a sub-process active in state $s$ at $t$.
By leveraging the monotonicity of optimal actions with respect to rewards, as shown in Lemma \ref{th:index} in Appendix \ref{ap:bisect}, we compute $\beta_t(s)$ via bisection search in interval $[0,U]$, where $U$ upper bounds the largest possible value of $\beta_t(s)$. For example, we can set $U$ as $T* \max_{s,a,t}\subr_t(s,a)$ when $\lambdav^*\geq 0$ (which we show in Appendix \ref{ap:upbd} that $\beta_t(s)$ cannot be greater than this value in this case). We pre-compute the set $\betav=\{\beta_t(\substate):\substate\in\substates,1\leq t\leq T\}$ before running the actual algorithm.

\subsubsection{Occupation measure $\rho$ and its corresponding optimal policy $\mathbf{\subp}^{\textbf{**}}$}
Our tie-breaking policy involves constructing an optimal Markov policy $\subp^{**}$ for the sub-MDP $Q(\lambdav^*)$ such that $\Eb^{\subp^{**}}[\subar_t]=\frac{m}{K}$, $\forall 1\leq t\leq T$. 
%We first observe:
%\begin{lemma}\label{th:exist}
%There exists an optimal Markov policy, $\subp^{**}$, for the sub-MDP, such that the probability of taking an active action is $\frac{m}{K}$ at every time step, that is, $\Eb^{\subp^{**}}[\subar_t]=\frac{m}{K}$, $\forall 1\leq t\leq T $.
%\end{lemma}
The existence of $\subp^{**}$ is shown in Appendix \ref{ap:exist}.
To compute $\subp^{**}$, we borrow the idea of occupation measure \citep{DynkinBook}. Define occupation measure, $\rho(s,a,t)$, induced by a policy $\subp$ to be the probability of being in state $s$ and taking action $a$ given time $t$ under $\subp$. Subsequently $\subp^{**}$ can be solved by the following linear program (LP):
\begin{equation}\label{eq:lp}
\begin{aligned}
& \underset{\{\rho(s,a,t):y\in\substates,a\in \subactions,t\in\{1,...,T\}\}}{\text{max}}
& & \sum_{t=1}^{T}\sum_{a\in\subactions}\sum_{s\in\substates}\rho(s,a,t)r'_t(s,a) \\
& \text{subject to}
& & \sum_{s\in\substates}\rho(s,1,t)=\frac{m}{K}, \; \forall t=1.\ldots,T\\
& & & \sum_{a\in \subactions}\rho(s,a,t) - \sum_{a\in\subactions}\sum_{s'\in\substates}\rho(s',a,t-1)\subpr^{a}(s',s)=0, \; \; \forall \substate\in\substates,2\leq t\leq T\\
& & & \sum_{a\in\subactions}\rho(s,a,1)=\ind{s=s_1} \;\;\forall s\in\substates\\
& & & \rho(s,a,t)\geq 0, \; \forall s\in\substates,a\in\subactions,t=1,\ldots,T,
\end{aligned}
\end{equation}
where $r'_t(s,a)=r_t(s,a)-\lambda^*_t\mathbbm{1}(a=1)$, $\forall s\in\substates, \subaction\in \subactions, 1\leq t\leq T$. The first constraint ensures that $\mathbb{E}^{\subp^{**}}[A_t]=\frac{m}{K}$. 
The second constraint ensures flow balance. 
The third constraint shows that we start at state $s_1$. 
The second and third constraint together imply $\sum_{a\in\subactions,s\in\substates} \rho(s,a,t)=1$, i.e., that $(\rho(s,a,t) : a\in\subactions,s\in\substates)$ is a probability distribution for each t.
The fourth and fifth constraints ensure that $\rho$ is a valid probability measure.

Let $\rho^{*}$ be an optimal solution to $\eqref{eq:lp}$, $\subp^{**}$ can then be constructed by
\begin{equation}\label{df:rho}
\subp^{**}(s,a,t)=
\begin{cases}
\frac{\rho^*(s,a,t)}{\sum_{a\in\subactions}\rho^*(s,a,t)},\; \; \text{if }\sum_{a\in\subactions}\rho^*(s,a,t)>0\\
\mathbbm{1}(a=1), \;\;\;\text{if }\sum_{a\in\subactions}\rho^*(s,a,t)=0 \text{ and }\beta_t(s) \geq \lambda^*_t\\
\mathbbm{1}(a=0), \;\;\;\text{if }\sum_{a\in\subactions}\rho^*(s,a,t)=0 \text{ and }\beta_t(s)< \lambda^*_t,
\end{cases}
\end{equation}for all $s\in \substates,\subaction\in\subactions,1\leq t\leq T.$

Here we also make an observation that $\lambdav^*\in P(\lambdav^*)$ can be computed by solving \eqref{eq:lp} with $r'$ replaced by $r$. 
%\subsubsection{$P_t(\substate)$}
%We define $P_t(\substate) =P^{\subp^{**}}[\substater_t=s]$ as the probability of landing in state $s$ under $\subp^{**}$. $P_t(\substate)$ can also be computed directly from the occupation measure $\rho^*$ of $\subp^{**}$ by leveraging the fact that $P_t(\substate)= \sum_{a\in\subactions}\rho(s,a,t)$. We pre-compute the set $P^{**}=\{P_t(s):\substate\in\substates,1\leq t\leq T\}$.
\subsection{Index policy}
Let $\{\beta_x:x\in\{1,...,K\}\}$ be the indices associated with the K sub-processes at time $t$. 
We define $\bar{\beta}_t$ to be the largest value $\beta$ in $\{\beta_x:x\in\{1,...,K\}\}$ such that at least $m$ sub-processes have indices of at least $\beta$. Our index policy then sets the sub-processes with indices strictly greater than $\bar{\beta}_t$ active, and those with indices strictly less than $\bar{\beta}_t$ inactive. 
When more than $m$ sub-processes have indices greater than or equal to $\bar{\beta}_t$, a tie-breaking a rule is needed.
Our tie-breaking rule allocates remaining resources (the remaining sub-processes to be set active) across tied states according to the probability distribution induced by $\subp^{**}$ over $\substates$ at time $t$. 
This tie-breaking ensures asymptotic optimality of the index policy as it enforces that the fraction of sub-processes in each state $s$ is equal to the distribution induced by $\subp^{**}$ in the limit. 
This idea shall become clear in Section \ref{sec:pf} where the proof of asymptotic optimality is presented.
 %Note we do not differentiate sub-processes that are in the same state.

To further illustrate how our tie-breaking rule works, let $I_t= \{s_x:1\leq x\leq K,\beta_t(s_x)=\bar{\beta}_t\}$ be the set of states occupied by the tied sub-processes, we allocate \begin{equation}\label{tb_ratio}
\frac{\rho(s,1,t)}{\sum_{s'\in I_t}\rho(s',1,t)}
\end{equation} fraction of the remaining resources to each tied states, when $\sum_{s'\in I_t}\rho(s',1,t)>0$, where $\rho$ is a solution to \eqref{eq:lp}. 
In cases when $\sum_{s'\in I_t}\rho(s',1,t)=0$, we do tie-breaking according to the number of sub-processes that are currently in each of the tied states. 

We then use the function Rounding(total, frac, avail) in Algorithm \ref{ag:tiebreak} to deal with the situations where the products between the desired fractions and remaining resources are not integers. Here $total$ represents the number of remaining resources, $frac$ is a vector of fractions to be allocated to each tied state, and $avail$ is a vector of the number of sub-processes in each tied state. The function also takes care of the corner cases in which the number of sub-processes in a tied state $s$ is less than the number of resources we would like to assign to $s$ according to the fraction in \eqref{tb_ratio}. We note the following property of this function Rounding, which we will rely on in our proof in Section 6.

\begin{remark}
\label{remark:rounding}
When total, avail, frac satisfy $\text{avail}_i\geq \text{total} * \text{frac}_i$, the output vector $b=\text{Rounding}(\text{total},\text{frac},\text{avail})$ satisfies $|\text{b}_i-\text{total} * \text{frac}_i|<1$ for all $i$.
\end{remark}

We formally present our index policy in Algorithm \ref{algIndex} and \ref{ag:tiebreak}. 
\begin{algorithm}
\caption{Index Policy $\hat{\allp}$}
\label{algIndex}
\begin{algorithmic}
\STATE Pre-compute: $\lambdav^*$; $\betav$; $\rho$. (Refer to earlier discussions for computation details) 
  \FOR{$t = 1,...,T$}
    \STATE Let $\beta_{t,[i]}$ be the $i^{th}$ largest element in the list $\beta_t(\allstater_{t,1}),...,\beta_t(\allstater_{t,K})$, so $\beta_{t,[1]}\geq\ldots\geq \beta_{t,[K]}$. 
    \STATE Let $\bar{\beta}_t = \beta_{t,[m]}$
    \STATE Let $I_t=\{\substate : \beta_t(s)=\bar{\beta}_t\ \text{and}\ s=\allstater_{t,x}\ \text{for some x}\}$
    \STATE Let $N_t(s) = |\{x:\allstater_{t,x}=s\}|$, for all $\substate$.
    \STATE For $\substate\in I_t$, let 
    $$
    q(s) = 
    \begin{cases}
	\frac{\rho(s,1,t)}{\sum_{s'\in I_t}\rho(s',1,t)}, \; \text{if } \sum_{s'\in I_t}\rho(s',1,t)>0\\
	\frac{N_t(s)}{\sum_{s'\in I_t} N_t(s')}, \; \text{otherwise}
    \end{cases}
    $$
    Let $b = \mathrm{Rounding}(m-\sum_{s':\beta_t(s')>\bar{\beta}_t}N_t(s'),
    (q(s):s\in I_t),(N_t(s):s\in I_t))$
    \FOR{all $\substate$}
    \STATE If $\beta_t(s)>\bar{\beta}_t$, set all $N_t(s)$ sub-processes in $s$ active.
    \STATE If $\beta_t(s)= \bar{\beta}_t$, set $b(s)$ sub-processes in $s$ active.
    \STATE If $\beta_t(s)< \bar{\beta}_t$, set 0 sub-processes in $s$ active.
    \ENDFOR
  \ENDFOR
\end{algorithmic}
\end{algorithm}

\begin{algorithm}
\caption{Rounding(total, frac, avail)}
\label{ag:tiebreak}
\begin{algorithmic}
\STATE Inputs: total (a scalar), frac (a vector satisfying $\sum_i \text{frac}_i = 1$), avail (a vector of the same length as frac satisfying $\text{total} \le \sum_i \text{avail}_i$)
\STATE Output: $b$ (a vector of the same length as the inputs satisfying $\sum_i b_i = \text{total}$, $b_i \le \text{avail}_i$)
\STATE Let $n  =  \length(\text{frac})$
\STATE Let b$_i = \min\{\text{avail}_i, \lfloor \text{total}*\text{frac}_i\rfloor\}$, for $i = 1,...,n$.
\STATE Let $j = 1$
\WHILE{$\text{total} > \sum_{i=1}^{n}\text{b}_i$}
\STATE Let $\text{b}_{j} = \text{b}_{j} + \ind{\text{avail}_{j}> b_j}$
\STATE Let $j = (j \mod n) +1$ 
\ENDWHILE
\RETURN b
\end{algorithmic}
\end{algorithm}

\begin{remark} \cite{Hawkins2003} proposed a \textit{minimum-lambda} policy, which, when translated into our setting, finds the largest Lagrange multiplier $\lambdav$ of the form $\lambdav_0+r\lambdav_1$ for which an optimal solution of the relaxed problem is feasible for the original MDP. The policy then sets active those sub-processes which would be set active in the relaxed problem. However, Hawkins did not specify what $\lambdav_0$ and $\lambdav_1$ should be, thus limiting the policy's applicability to finite horizon settings. Our policy is similar to that of Hawkins in that 1) setting the sub-processes with the largest indices in our policy is equivalent to finding the largest $\lambdav$ that satisfies the constraints of the original MDP and setting the corresponding sub-processes active, and; 2) We also limit the values of Lagrange multiplier $\lambdav$ considered to a ray, as $\lambdav^*[\beta,t]$ can be written in the form of $\lambdav^* + r\ev_t$, for $r\in\Rb$. However, unlike Hawkins' policy, our policy defines the starting point and the direction of the ray, along with a tie-breaking policy that ensures asymptotic optimality. 
\end{remark}
\section{Proof of Asymptotic Optimality}\label{sec:pf}
Our index policy $\hat{\allp}$ achieves asymptotic optimality when we let the number of sub-processes $K$ go to infinity, while holding $\alpha = \frac{m}{K}$ constant.
Let $Z(\allp,m,K)$ to denote the expected reward of the original MDP obtained by policy $\allp$, to emphasize the dependency of this quantity on $K$ and $m$.
We use $\allpset_{m,K}$ to denote the set of all feasible Markov policies for the original MDP with $K$ sub-processes and a budget of $m$ activations per period. Lastly, it should be understood that whenever we use $\hat{\allp}$ to denote our index policy there is a dependency of $\hat{\allp}$ on $m$ and $K$ that is not explicitly stated. 
We are now ready to state the main result of this paper, which shows that the per arm gap between the upper bound and the index policy goes to zero under the limit assumption.:
\begin{theorem}\label{th:asp}
For any $\alpha\in(0,1)$, 
\begin{equation}
\lim_{K\rightarrow\infty}\frac{1}{K}\left(Z(\hat{\allp},\lfloor \alpha K\rfloor,K) - \max_{\allp\in\allpset_{\lfloor \alpha K\rfloor,K}}Z(\allp,\lfloor \alpha K\rfloor,K)\right) = 0.
\end{equation}
\end{theorem}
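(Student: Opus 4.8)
The plan is to sandwich the normalized value of $\hat\allp$ between the Lagrangian upper bound and a lower bound that converges to the same per-arm limit, and to identify that common limit with the per-arm reward of the single-arm policy $\subp^{**}$. Write $m=\lfloor\alpha K\rfloor$. By Lemmas~\ref{th:up} and~\ref{th:decom},
\begin{equation*}
\max_{\allp\in\allpset_{m,K}} Z(\allp,m,K)\ \le\ P(\lambdav^*)\ =\ K\,Q(\lambdav^*)+m\sum_{t=1}^T\lambda^*_t,
\end{equation*}
so $\tfrac1K\max_{\allp}Z(\allp,m,K)\to Q(\lambdav^*)+\alpha\sum_t\lambda^*_t$. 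On the other hand, since $\subp^{**}$ is optimal for the sub-MDP $Q(\lambdav^*)$ and satisfies $\Eb^{\subp^{**}}[\subar_t]=\alpha$ for every $t$, its unpenalized per-arm reward is
\begin{equation*}
\Eb^{\subp^{**}}\!\Big[\sum_{t=1}^T \subr_t(\substater_t,\subar_t)\Big]=Q(\lambdav^*)+\sum_{t=1}^T\lambda^*_t\,\Eb^{\subp^{**}}[\subar_t]=Q(\lambdav^*)+\alpha\sum_{t=1}^T\lambda^*_t,
\end{equation*}
which is exactly the upper-bound limit. Because $Z(\hat\allp,m,K)\le\max_\allp Z(\allp,m,K)\le P(\lambdav^*)$, it therefore suffices to prove $\tfrac1K Z(\hat\allp,m,K)\to Q(\lambdav^*)+\alpha\sum_t\lambda^*_t$, which squeezes the normalized gap to zero.

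To this end, let $\hat\mu_t(s)=N_t(s)/K$ be the random empirical state distribution under $\hat\allp$, and let $\mu_t(s)=\sum_a\rho(s,a,t)$ be the deterministic state distribution of $\subp^{**}$, so that $\rho(s,a,t)=\mu_t(s)\,\subp^{**}(s,a,t)$. I would prove by induction on $t$ that $\Eb\,\|\hat\mu_t-\mu_t\|\to0$ as $K\to\infty$; the base case holds because all arms start in $s_1$. For the inductive step, the central claim is that, on the event $\hat\mu_t\approx\mu_t$, the per-state activation counts chosen by $\hat\allp$ agree with those prescribed by $\subp^{**}$ up to $o(K)$. Since $\beta_t(\cdot)$ depends only on the state, the threshold $\bar\beta_t$ is the $m$-th largest index and converges to $\lambda^*_t$: writing out the one-step lookahead at time $t$ shows that the value of pulling minus not pulling is strictly decreasing (slope $-1$) in the $t$-th multiplier, so by the definition of $\beta_t(s)$ every state with $\beta_t(s)>\lambda^*_t$ has pulling strictly optimal (hence $\subp^{**}(s,1,t)=1$) and every state with $\beta_t(s)<\lambda^*_t$ has pulling strictly suboptimal (hence $\subp^{**}(s,1,t)=0$). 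Thus $\hat\allp$ fully activates the former, idles the latter, and needs tie-breaking only on states with $\beta_t(s)=\lambda^*_t$; on those, the rule~\eqref{tb_ratio} splits the residual budget in the proportions $\rho(s,1,t)/\sum_{s'\in I_t}\rho(s',1,t)$, which are exactly the conditional pull fractions of $\subp^{**}$, while Remark~\ref{remark:rounding} bounds the integer-rounding error by one per state, i.e.\ $o(K)$ in aggregate.

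Conditional on the time-$t$ states and the chosen actions, the arms transition independently, so $\hat\mu_{t+1}$ is an average of $K$ conditionally independent bounded terms; a Chebyshev or Azuma--Hoeffding estimate then shows $\hat\mu_{t+1}$ concentrates around its conditional mean, which by the previous paragraph tends to $\sum_{s,a}\mu_t(s)\subp^{**}(s,a,t)\subpr^a(s,s')=\mu_{t+1}(s')$. This closes the induction. Passing to the reward, $\tfrac1K Z(\hat\allp,m,K)=\Eb\big[\sum_{t}\sum_{s,a}\hat f_t(s,a)\,\subr_t(s,a)\big]$, where $\hat f_t(s,a)$ is the fraction of arms in state $s$ taking action $a$; since $\hat f_t\to\rho(\cdot,\cdot,t)$ and rewards are bounded over the finite sets $\substates,\subactions$ and finite horizon $T$, dominated convergence yields $\tfrac1K Z(\hat\allp,m,K)\to\sum_t\sum_{s,a}\rho(s,a,t)\subr_t(s,a)=Q(\lambdav^*)+\alpha\sum_t\lambda^*_t$, matching the upper bound.

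The main obstacle is the inductive step, specifically the verification that $\hat\allp$'s action allocation reproduces $\subp^{**}$ in the limit, and two issues demand care. First, ties at the threshold are generic rather than exceptional here, so one cannot invoke a unique quantile; the argument must track the mass $\mu_t(\{s:\beta_t(s)=\lambda^*_t\})$ and check that, whether this mass is positive or zero, the activation fractions still converge to those of $\subp^{**}$ (the tie-breaking rule is precisely what absorbs this mass, and the rounding remark controls the $O(1)$ slack). Second, under $\hat\allp$ the arms are coupled through $\bar\beta_t$ and the tie-break, so the law of large numbers must be applied conditionally on the time-$t$ empirical distribution and action profile rather than to independent arms; one must then show the resulting $o(1)$ errors propagate through the $T$ stages without amplification, which is where finiteness of $\substates$, boundedness of the rewards, and the fixed horizon $T$ keep the accumulated error under control.
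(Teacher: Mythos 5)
Your proposal is correct and follows essentially the same route as the paper's own proof: the same Lagrangian sandwich (Lemmas \ref{th:up} and \ref{th:decom}) combined with an induction over time showing that the empirical state/action distribution under $\hat{\allp}$ converges to the occupation measure of $\subp^{**}$, driven by the index--multiplier dichotomy (the paper's Lemma \ref{th:eq}), the tie-breaking proportions matching $\rho$, and the rounding control of Remark \ref{remark:rounding}. The only material difference is technical rather than conceptual: you target $L^1$ convergence via conditional concentration, while the paper proves almost-sure convergence (Theorem \ref{th:conv}, via its binomial lemma) and then passes to expectations using boundedness/uniform integrability.
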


To formalize the notations that will be used throughout the proofs, we augment $P(\lambdav)$ to $P(\lambdav,m,K)$ to indicate the values of $m$ and $K$ assumed in the Lagrangian relaxation. We use $\lambdav^*$ to denote one and any element in $\arg\inf_{\lambdav}P(K,\alpha K,\lambdav)$ and let $\pi^{**}$ be the optimal policy constructed in \eqref{df:rho} using $m=\alpha K$, which satisfies $\mathbb{E}^{\pi^{**}}(A_t)=\alpha$. Note $\lambdav^*$ and $\pi^{**}$ depend on only $\alpha$ (not on $K$).

% Define $P_t(s)$ as the probability of being in state $s$ at time $t$ under $\pi^{**}$. 
% We observe the following relation between $P_t(s)$, occupation measure $\rho$ and $\pi^{**}$:
% $$\rho(s,a,t)=P_t(s)*\pi^{**}(s,a,t)$$ for all $s,a,t$. 

As before, we let $N_t(s)$ be the number of sub-processes in state $s$ at time $t$ under $\hat{\allp}$.  
We additionally define $M_t(\substate)$ to be the number of sub-processes in state $s$ at time $t$ that are set active by $\hat{\allp}$. These quantities depend on $K$ and $m$, but for simplicity we do not include this dependence in the notation: they always assume $m=\lfloor \alpha K \rfloor$ and we rely on context to make clear the value of $K$ assumed.
We also define $V_t(\substate)$ to be the set of states with the same index value as $s$, including $s$, and $U_t(s)$ to be the set of states with index value greater than that of $s$, for each time $t$.  These quantities depend on $\alpha$ but not on $K$ or $m$.

We prove Theorem \ref{th:asp} by first demonstrating below in Theorem~\ref{th:conv} that for each time $t$,
the proportion of the sub-processes that are in state $s$ under our index policy $\hat{\allp}$,
$\frac{N_t(s)}{K}$, approaches $P_t(s)$ as $K\rightarrow \infty$. 
In other words, our index policy $\hat{\subp}$ recreates the behavior of $\subp^{**}$ in the large $K$ limit.
\begin{theorem}\label{th:conv}
For every $s\in\substates$ and $1\leq t\leq T$,
\begin{equation}\label{conv1}
\lim_{K\rightarrow\infty}\frac{N_t(s)}{K} = P_t(s), \hspace{2mm} P^{\hat{\allp}}-a.s.,
\end{equation}
and
\begin{equation}\label{conv2}
\lim_{K\rightarrow\infty}\frac{M_t(s)}{K} = P_t(s)*\subp^{**}(s,1,t), \hspace{2mm} P^{\hat{\allp}}-a.s., 
\end{equation}
\end{theorem}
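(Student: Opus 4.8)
The plan is to prove Theorem~\ref{th:conv} by induction on $t$, establishing \eqref{conv1} and \eqref{conv2} simultaneously at each time step. The base case $t=1$ is immediate: every sub-process starts in the fixed state $s_1$, so $N_1(s)/K$ equals $\mathbbm{1}(s=s_1)=P_1(s)$ deterministically, and since all $K$ arms share the single index $\beta_1(s_1)$ they are all tied, so the tie-breaking rule activates $\lfloor\alpha K\rfloor$ of them; dividing by $K$ and using $\pi^{**}(s_1,1,1)=\alpha$ (which follows from the first and third LP constraints at $t=1$) gives \eqref{conv2}. For the inductive step I would first write the transition recursion: conditional on the configuration through time $t$, the $M_t(s)$ active arms in state $s$ move independently to $s'$ with probability $P^1(s,s')$ and the $N_t(s)-M_t(s)$ passive arms move with probability $P^0(s,s')$, so $N_{t+1}(s')$ is a sum of independent binomial random variables with conditional mean $\sum_s[M_t(s)P^1(s,s')+(N_t(s)-M_t(s))P^0(s,s')]$.

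The second step handles \eqref{conv1} at $t+1$. Dividing the conditional mean by $K$ and invoking the inductive hypotheses $N_t(s)/K\to P_t(s)$ and $M_t(s)/K\to P_t(s)\pi^{**}(s,1,t)$ shows that the normalized conditional mean converges to $\sum_s\sum_a P_t(s)\pi^{**}(s,a,t)P^a(s,s')$, which is exactly $P_{t+1}(s')$ by the forward (Chapman--Kolmogorov) recursion satisfied by the state distribution of $\pi^{**}$. To pass from the conditional mean to $N_{t+1}(s')/K$ itself, I would apply a Hoeffding bound to the bounded independent summands, giving $P(|N_{t+1}(s')/K-(\text{conditional mean})/K|>\epsilon\mid\mathcal{F}_t)\le 2e^{-cK\epsilon^2}$; since this tail is summable in $K$, a Borel--Cantelli argument (after coupling all systems on a common probability space through i.i.d.\ transition-generating uniforms) yields almost sure convergence of $N_{t+1}(s')/K$ to $P_{t+1}(s')$, completing \eqref{conv1}.

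The third and most delicate step is \eqref{conv2} at $t+1$, whose key is to show that the empirical threshold $\bar\beta_{t+1}$ equals $\lambda^*_{t+1}$ for all large $K$. Because $\pi^{**}$ is optimal for $Q(\lambda^*)$, it plays with probability $1$ on states with $\beta_{t+1}(s)>\lambda^*_{t+1}$ and with probability $0$ on states with $\beta_{t+1}(s)<\lambda^*_{t+1}$, so the identity $\mathbb{E}^{\pi^{**}}[A_{t+1}]=\alpha$ rearranges into $\sum_{\beta_{t+1}(s)>\lambda^*_{t+1}}P_{t+1}(s)\le\alpha\le\sum_{\beta_{t+1}(s)\ge\lambda^*_{t+1}}P_{t+1}(s)$. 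Combined with \eqref{conv1}, this forces the count of arms with index strictly above $\lambda^*_{t+1}$ below $\lfloor\alpha K\rfloor$ while the count at least $\lambda^*_{t+1}$ exceeds it, so $\bar\beta_{t+1}=\lambda^*_{t+1}$ eventually. On the non-tied states $M_{t+1}(s)$ is then either $N_{t+1}(s)$ or $0$, matching $P_{t+1}(s)\pi^{**}(s,1,t+1)$ directly. On the tied set $I_{t+1}$ I would use the occupation-measure identity $\rho(s,1,t+1)=P_{t+1}(s)\pi^{**}(s,1,t+1)$ together with $\sum_{s'\in I_{t+1}}\rho(s',1,t+1)=\alpha-\sum_{\beta>\lambda^*}P_{t+1}(s')$: the number of activations distributed among tied states is $\lfloor\alpha K\rfloor-\sum_{\beta>\lambda^*}N_{t+1}(s')$, and the tie-breaking fraction $q(s)$ equals $\rho(s,1,t+1)/\sum_{s'\in I_{t+1}}\rho(s',1,t+1)$, so the product telescopes to $K\,P_{t+1}(s)\pi^{**}(s,1,t+1)$ up to an $o(K)$ rounding error bounded by Remark~\ref{remark:rounding}.

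I expect the main obstacle to be the boundary and degenerate cases in this last step. When $\sum_{\beta_{t+1}(s)>\lambda^*_{t+1}}P_{t+1}(s)=\alpha$, the mass allocated to the tied set is zero in the limit but can be a small $o(K)$ quantity for finite $K$, so I must verify that either branch of the allocation rule (the fraction based on $\rho$, or the count-based fraction triggered by $\sum_{s'\in I_t}\rho(s',1,t)=0$) still produces $M_{t+1}(s)/K\to 0$, consistently with $\pi^{**}(s,1,t+1)=0$ there. A related care point is checking the Rounding hypothesis $\text{avail}_i\ge\text{total}\cdot\text{frac}_i$ of Remark~\ref{remark:rounding}, i.e.\ that each tied state contains enough arms to meet its target allocation, uniformly over $I_{t+1}$; this follows from \eqref{conv1} but must be verified with high probability for large $K$. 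Finally, making the ``$P^{\hat{\pi}}$-almost surely'' statement precise along the sequence in $K$ requires fixing the common coupling above so the Borel--Cantelli step applies simultaneously to the finitely many pairs $(s,t)$.
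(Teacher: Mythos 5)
Your overall strategy matches the paper's: induction on $t$ proving \eqref{conv1} and \eqref{conv2} together; the binomial decomposition of $N_{t+1}(s)$ into transitions from active and passive arms for the first limit (the paper uses a binomial-limit lemma, Lemma \ref{th:bino}, where you use Hoeffding plus Borel--Cantelli -- both work); and, for the second limit, an analysis of the activation threshold and tie-breaking built on the same ingredients the paper uses, namely the index/$\pi^{**}$ correspondence (the paper's Lemma \ref{th:eq}), the occupation-measure identity $\rho(s,1,t)=P_t(s)\pi^{**}(s,1,t)$, and the rounding bound of Remark \ref{remark:rounding}.

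The gap is the step ``so $\bar\beta_{t+1}=\lambda^*_{t+1}$ eventually.'' That conclusion only follows when both inequalities in $\sum_{\{s:\beta_{t+1}(s)>\lambda^*_{t+1}\}}P_{t+1}(s)\le\alpha\le\sum_{\{s:\beta_{t+1}(s)\ge\lambda^*_{t+1}\}}P_{t+1}(s)$ are strict. You flag the degenerate case where the lower inequality is tight, but not the one where the upper inequality is tight: if $\alpha=\sum_{\{s:\beta_{t+1}(s)\ge\lambda^*_{t+1}\}}P_{t+1}(s)$, then the number of arms with index at least $\lambda^*_{t+1}$ fluctuates around $\lfloor\alpha K\rfloor$ and can fall below it infinitely often, so $\bar\beta_{t+1}<\lambda^*_{t+1}$ along a subsequence. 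In that event every state $s$ with $\beta_{t+1}(s)=\lambda^*_{t+1}$ is \emph{fully} activated, giving $M_{t+1}(s)/K\to P_{t+1}(s)$, and matching the target $P_{t+1}(s)\pi^{**}(s,1,t+1)$ requires knowing that $\pi^{**}(s,1,t+1)=1$ whenever $P_{t+1}(s)>0$ at the tied level. Lemma \ref{th:eq}, which you invoke, is silent on states whose index \emph{equals} $\lambda^*_{t+1}$; the missing ingredient is exactly the paper's Lemma \ref{th:p1}, which converts tightness of these cumulative-probability inequalities into statements about $\pi^{**}$ at the boundary. This is where the paper's proof spends most of its effort (Lemma \ref{th:p1} together with Lemmas \ref{th:movein}, \ref{th:equiv}, \ref{th:ppi}, whose case analysis covers all the boundary configurations uniformly). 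Two smaller points: in the degenerate case you do flag, the correct conclusion is that the target $P_{t+1}(s)\pi^{**}(s,1,t+1)$ is zero for tied states, not that $\pi^{**}(s,1,t+1)=0$ (one can have $\pi^{**}(s,1,t+1)>0$ with $P_{t+1}(s)=0$); and your justification of the index/$\pi^{**}$ correspondence from optimality alone is incomplete for states visited with probability zero under $\pi^{**}$, where the paper's Lemma \ref{th:eq} must fall back on the explicit construction \eqref{df:rho}.
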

Before proving Theorem \ref{th:conv}, we first present two intermediate results, whose proofs are given in Appendix \ref{ap:eq} and \ref{ap:p1}.
\begin{lemma}\label{th:eq}
At time $1\leq t\leq T$, for all $\substate\in\substates$ , we have
\begin{enumerate}[(1)]
    \item If $\beta_t(\substate)>\lambda^*_t$, then $\subp^{**}(\substate,1,t) = 1$.
    \item If $\beta_t(\substate)<\lambda^*_t$, then $\subp^{**}(\substate,1,t) = 0$.
\end{enumerate}
\end{lemma}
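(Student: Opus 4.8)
The plan is to characterize the index $\beta_t(s)$ as the exact threshold on the period-$t$ penalty below which pulling is strictly optimal in the sub-MDP, and then to invoke that $\pi^{**}$ is an optimal policy for the \emph{unconstrained} sub-MDP $Q(\lambdav^*)$, so that it must play the uniquely optimal action in every reachable state.

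First I would make the structural observation behind the index. Fix $s$ and $t$ and consider the sub-MDP under the penalty vector $\lambdav^*[\beta,t]$, which differs from $\lambdav^*$ only in its $t$-th coordinate. By backward induction the continuation value $V^{\lambdav^*[\beta,t]}(s',t+1)$ depends only on the penalties at times $t+1,\dots,T$, hence is independent of $\beta$. Writing the one-step gain from pulling as
$$\Delta_t(s,\beta)=\Big(r_t(s,1)-\beta+\textstyle\sum_{s'}P^1(s,s')V^{\lambdav^*[\beta,t]}(s',t+1)\Big)-\Big(r_t(s,0)+\textstyle\sum_{s'}P^0(s,s')V^{\lambdav^*[\beta,t]}(s',t+1)\Big),$$
the $\beta$-independence of the continuation terms gives $\Delta_t(s,\beta)=c_t(s)-\beta$ for a constant $c_t(s)$ not depending on $\beta$. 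Pulling belongs to some optimal policy of $Q(\lambdav^*[\beta,t])$ iff $\Delta_t(s,\beta)\ge 0$, i.e.\ iff $\beta\le c_t(s)$; by \eqref{df:index} this identifies $\beta_t(s)=c_t(s)$. Consequently, at the true penalty $\lambda^*_t$, pulling is the unique optimal action when $\lambda^*_t<\beta_t(s)$ and not-pulling is the unique optimal action when $\lambda^*_t>\beta_t(s)$. This is the monotonicity recorded in Lemma~\ref{th:index}.

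Next I would use that $\pi^{**}$ is optimal for the unconstrained sub-MDP $Q(\lambdav^*)$. The objective of the LP \eqref{eq:lp} evaluated at the occupation measure of any policy $\pi$ equals $\Eb^{\pi}[\sum_t r_t(S_t,A_t)-\lambda^*_t A_t]$, i.e.\ exactly the sub-MDP objective at penalty $\lambdav^*$; so the LP value is at most $Q(\lambdav^*)$. By dual optimality of $\lambdav^*$, the subgradient condition $0\in\partial P(\lambdav^*)$ yields an optimal policy of $Q(\lambdav^*)$ whose activation rates equal $\alpha$ in every period (the content of Appendix~\ref{ap:exist}); its occupation measure is LP-feasible and attains $Q(\lambdav^*)$. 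Hence the LP value equals $Q(\lambdav^*)$ and the maximizer $\rho^*$ is an optimal occupation measure for the unconstrained sub-MDP. The elementary fact that an optimal (possibly randomized) policy places mass only on optimal actions in states reached with positive probability then applies to $\pi^{**}$ reconstructed from $\rho^*$ via \eqref{df:rho}.

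Finally I would combine the two ingredients. For part (1) suppose $\beta_t(s)>\lambda^*_t$. If $\sum_a\rho^*(s,a,t)>0$ then $\pi^{**}$ reaches $s$ at time $t$ and plays only optimal actions; since pulling is the unique optimal action, $\pi^{**}(s,1,t)=1$. If $\sum_a\rho^*(s,a,t)=0$ then, because $\beta_t(s)>\lambda^*_t$ implies $\beta_t(s)\ge\lambda^*_t$, definition \eqref{df:rho} directly sets $\pi^{**}(s,1,t)=1$. Part (2) is symmetric: $\beta_t(s)<\lambda^*_t$ makes not-pulling the unique optimal action on the support of $\rho^*$, and \eqref{df:rho} assigns the inactive action off the support. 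The main obstacle is the middle step---verifying that imposing the per-period budget $\alpha$ does not lower the optimal value, so that $\pi^{**}$ is genuinely optimal for $Q(\lambdav^*)$; this is exactly where dual optimality of $\lambdav^*$ and the existence result of Appendix~\ref{ap:exist} are essential, and everything else reduces to the affine threshold computation and the principle that optimal policies play optimal actions.
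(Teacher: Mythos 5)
Your proposal is correct and follows essentially the same route as the paper's proof: both exploit that the continuation values $V^{\lambdav^*[\beta,t]}(\cdot,t+1)$ are independent of the $t$-th penalty coordinate to conclude that $\beta_t(s)>\lambda^*_t$ makes pulling the strictly (hence uniquely) optimal one-step action at $(s,t)$ under $\lambdav^*$, then invoke optimality of $\subp^{**}$ for the unconstrained sub-MDP $Q(\lambdav^*)$ on states visited with positive probability, and fall back on the explicit construction \eqref{df:rho} for states off the support. Your explicit affine characterization $\Delta_t(s,\beta)=c_t(s)-\beta$ is just a cleaner restatement of the paper's $\epsilon$-argument, and your LP-duality digression merely re-derives the optimality of $\subp^{**}$ that the paper takes as given from its construction and Appendix \ref{ap:exist}.
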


\begin{lemma}\label{th:p1}
For any state $s\in\substates$ and time $1\leq t\leq T$, 
\begin{enumerate}[(1)]
\item If $\alpha-\sum_{s'\in U_{t}(s)\cup V_{t}(s)}P_{t}(s')\geq 0$, then $\subp^{**}(s,1,t) = 1.$
\item If $\alpha-\sum_{s'\in U_{t}(s)}P_{t}(s') \leq 0$, then $\subp^{**}(s,1,t) = 0.$
\end{enumerate}

\end{lemma}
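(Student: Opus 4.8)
The plan is to deduce both statements from the fact that $\subp^{**}$ is a threshold rule in the index $\beta_t(\cdot)$ whose threshold is exactly $\lambda^*_t$. Two ingredients are already in hand. First, Lemma~\ref{th:eq} states that $\subp^{**}(s',1,t)=1$ whenever $\beta_t(s')>\lambda^*_t$ and $\subp^{**}(s',1,t)=0$ whenever $\beta_t(s')<\lambda^*_t$, so only the states at the exact level $\beta_t(s')=\lambda^*_t$ can be pulled fractionally. Second, $\subp^{**}$ was constructed to satisfy $\mathbb{E}^{\subp^{**}}[A_t]=\alpha$, i.e.
\[
\sum_{s'\in\substates}P_t(s')\,\subp^{**}(s',1,t)=\alpha,\qquad P_t(s'):=P^{\subp^{**}}(S_t=s')=\sum_{a}\rho^*(s',a,t).
\]
Splitting this sum according to Lemma~\ref{th:eq} gives the sandwiching
\[
\sum_{s':\,\beta_t(s')>\lambda^*_t}P_t(s')\ \le\ \alpha\ \le\ \sum_{s':\,\beta_t(s')\ge\lambda^*_t}P_t(s'),
\]
which pins $\lambda^*_t$ as the index level at which the cumulative occupation mass, accumulated from the largest index downward, first reaches $\alpha$. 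This is the engine of the whole argument.

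For part (1), assume $\alpha\ge\sum_{s'\in U_t(s)\cup V_t(s)}P_t(s')=\sum_{s':\beta_t(s')\ge\beta_t(s)}P_t(s')$ and suppose first that $P_t(s)>0$. If we had $\beta_t(s)<\lambda^*_t$, then $s$ would contribute positively on top of all the mass at level $\ge\lambda^*_t$, giving $\sum_{s':\beta_t(s')\ge\beta_t(s)}P_t(s')>\sum_{s':\beta_t(s')\ge\lambda^*_t}P_t(s')\ge\alpha$, contradicting the hypothesis; hence $\beta_t(s)\ge\lambda^*_t$. When $\beta_t(s)>\lambda^*_t$ the conclusion $\subp^{**}(s,1,t)=1$ is immediate from Lemma~\ref{th:eq}. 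When $\beta_t(s)=\lambda^*_t$, the hypothesis and the right-hand side of the sandwiching force $\alpha=\sum_{s':\beta_t(s')\ge\lambda^*_t}P_t(s')$; subtracting the budget identity and discarding (by Lemma~\ref{th:eq}) the terms with $\beta_t(s')>\lambda^*_t$ yields $\sum_{s':\beta_t(s')=\lambda^*_t}P_t(s')\,(1-\subp^{**}(s',1,t))=0$, and since every summand is nonnegative and $P_t(s)>0$ we conclude $\subp^{**}(s,1,t)=1$. Part (2) is the mirror image: for $P_t(s)>0$ the hypothesis $\alpha\le\sum_{s'\in U_t(s)}P_t(s')$ rules out $\beta_t(s)>\lambda^*_t$; the strict case $\beta_t(s)<\lambda^*_t$ is handled directly by Lemma~\ref{th:eq}, and the boundary $\beta_t(s)=\lambda^*_t$ is handled by forcing $\alpha=\sum_{s':\beta_t(s')>\lambda^*_t}P_t(s')$ and reading off $\sum_{s':\beta_t(s')=\lambda^*_t}P_t(s')\,\subp^{**}(s',1,t)=0$ from the budget identity, so $\subp^{**}(s,1,t)=0$.

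I expect the boundary level $\beta_t(s)=\lambda^*_t$ to be the main obstacle: Lemma~\ref{th:eq} is silent there, so the value of $\subp^{**}(s,1,t)$ cannot be read off pointwise and must instead be squeezed out of the budget identity through the nonnegativity (complementary-slackness) argument above, after the hypothesis has been upgraded to an exact equality. A secondary point to dispatch is the degenerate states with $P_t(s)=0$, whose pull probabilities are not fixed by the LP but assigned by the conventions in \eqref{df:rho} (an unvisited state being pulled exactly when $\beta_t(s)\ge\lambda^*_t$); these states play no role in the downstream Theorem~\ref{th:conv}, since both $N_t(s)/K$ and $M_t(s)/K$ converge to $P_t(s)=0$ regardless of the assigned value, so it suffices to verify the claims for the visited states handled above.
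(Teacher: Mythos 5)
Your treatment of the positive-probability states is correct and rests on the same two pillars as the paper's own proof: the threshold structure from Lemma~\ref{th:eq} and the budget identity $\sum_{s'}P_t(s')\subp^{**}(s',1,t)=\alpha$. The difference is organizational. The paper argues by contradiction, assuming $\subp^{**}(s,1,t)<1$ and deriving the chain $\alpha<\sum_{s':\subp^{**}(s',1,t)=1}P_t(s')+\sum_{s':0<\subp^{**}(s',1,t)<1}P_t(s')\le\sum_{s'\in U_t(s)\cup V_t(s)}P_t(s')\le\alpha$, whereas you argue directly, first locating $\beta_t(s)$ relative to $\lambda^*_t$ via the sandwich $\sum_{s':\beta_t(s')>\lambda^*_t}P_t(s')\le\alpha\le\sum_{s':\beta_t(s')\ge\lambda^*_t}P_t(s')$ and then settling the boundary level $\beta_t(s)=\lambda^*_t$ by complementary slackness. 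Your packaging is in fact tighter: the paper's strict inequality in \eqref{greaterp} holds only if some fractionally-pulled state carries positive mass, a condition it never verifies, while your slackness step requires no strictness at all.

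The one point where you genuinely depart from the paper is the states with $P_t(s)=0$. The lemma quantifies over all $s\in\substates$, and for an unvisited state $\subp^{**}(s,1,t)$ is fixed by the convention in \eqref{df:rho}, i.e.\ by whether $\beta_t(s)\ge\lambda^*_t$; your argument does not pin this down, so strictly you prove a weaker statement than the one claimed. However, your diagnosis of why this corner is delicate is accurate, and it is not a defect specific to your proof: the paper's own argument breaks at exactly this point, since its strict inequality can fail when every fractionally-pulled state has zero mass --- for instance, an unreachable state whose index lies strictly between $\lambda^*_t$ and the indices of all positive-mass non-pulled states satisfies the hypothesis of part~(1) yet may be assigned $\subp^{**}(s,1,t)=0$ by \eqref{df:rho}, depending on which minimizer $\lambdav^*$ is selected. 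Your further observation that the downstream results (Lemmas~\ref{th:movein}, \ref{th:equiv}, \ref{th:ppi} and Theorem~\ref{th:conv}) invoke this lemma only through $P_t$-weighted quantities, so that zero-mass states are harmless, is also correct. In short: on its stated scope your proof is sound and follows the paper's approach with a cleaner degenerate-case analysis; the residual zero-mass case is a soft spot of the lemma itself rather than of your argument.
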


We will also require the following technical result in the proof of Theorem~\ref{th:conv}. Again the proof is offered in Appendix \ref{ap:bino}

Now we are ready to prove Theorem \ref{th:conv}.
\begin{proof}{}
We prove \eqref{conv1} and \eqref{conv2} simultaneously via induction over the time periods.

When $t=1$, all sub-processes starts in state $s_1$, and we have
\begin{align*}
\lim_{K\rightarrow \infty}\frac{N_1(s)}{K} = \lim_{K\rightarrow \infty}\frac{K}{K} = 1 = P_1(s) \; \text{if }s = s_1,\\
\lim_{K\rightarrow \infty}\frac{N_1(s)}{K} = \lim_{K\rightarrow \infty}\frac{0}{K} = 0 = P_1(s) \; \text{otherwise}.
\end{align*}
By the set-up of the original MDP, $M_1(s)=\lfloor\alpha * K\rfloor$, and we have
\begin{align*}
&\lim_{K\rightarrow \infty}\frac{M_1(s)}{K} = \lim_{K\rightarrow \infty}\frac{\lfloor\alpha*K\rfloor}{K} = \alpha = \subp^{**}(s,1,t) =  P_1(s)*\subp^{**}(s,1,t) , \; \text{if }s = s_1,\\
&\lim_{K\rightarrow \infty}\frac{M_1(s)}{K} = \frac{0}{K} = 0 = P_1(s)*\subp^{**}(s,1,t), \; \text{otherwise},
\end{align*} 
so we have proved the base case of the induction. 

Now assume \eqref{conv1} and \eqref{conv2} hold up until time $t$. 
Fix a state $s\in \substates$ and time $1\leq t\leq T$, define $Y_t(s',s)$ to be the number of sub-processes set active by $\hat{\allp}$ in $s'$ at time $t$ which transition to state $s$ at time $t+1$, and $X_t(s',s)$ to be the number of sub-processes set inactive by $\hat{\allp}$ in $s'$ at time $t$ which transition to $s$ at time $t+1$. Note that $Y_t(s',s)$ and $X_t(s',s)$ also depend on $K$. 
We can subsequently express $N_{t+1}(s)$ as
\begin{align*}
&N_{t+1}(s) = \sum_{s'\in \substates} Y_t(s',s) + X_t(s',s).
\end{align*}
Dividing both sides by $K$, and taking $K$ to a limit, we get 
\begin{equation}\label{n1}
\lim_{K\rightarrow \infty}\frac{N_{t+1}(s)}{K} = \lim_{K\rightarrow \infty}\sum_{s'\in \substates} \frac{1}{K} Y_t(s',s) + \lim_{K\rightarrow \infty}\sum_{s'\in \substates}\frac{1}{K} X_t(s',s) 
\end{equation}
Note $Y_t(s',s)$ is a binomial random variable with $M_t(s')$ trials and success probability $\subpr^{1}(s',s)$
Similarly, $X_t(s',s)$ is a binomial random variable with $N_t(s')-M_t(s')$ trials and success probability $\subpr^{0}(s',s)$. 
We can rewrite the RHS of \eqref{n1} by applying Lemma \ref{th:bino}, which is stated at the end of the section:
\begin{align}
\lim_{K\rightarrow\infty}\frac{N_{t+1}(s)}{K} &= \sum_{s'\in\substates}\lim_{K\rightarrow\infty}\frac{M_t(s')}{K}*\subpr^1(s',s) + \sum_{s'\in\substates}\lim_{K\rightarrow\infty}\frac{N_t(s')-M_t(s')}{K}*\mathbb{P}_x^0(s',s)\nonumber\\
& = \sum_{s'\in \substates} P_t(s')*\subp^{**}(s',1,t+1)*\subpr^1(s',s)\\
&\;\;\;\;+\sum_{s'\in \substates}
P_t(s')(1-\subp^{**}(s',1,t+1))*\mathbb{P}_x^0(s',s)\hspace{2mm}a.s.\nonumber\\ 
&=P_{t+1}(s).\hspace{2mm}a.s. \label{ntlimit} 
\end{align}
The last equality follows as we have exhausted all the ways of getting to $s$ at time $t+1$. Hence we have shown (\ref{conv1}) holds for time $t+1$. 

To show (\ref{conv2}) holds for time $t+1$, define sets $\Pv_t = \{P_t(s):s\in\substates\}$, and $\Nv_t=\{N_t(s):s\in\substates\}$. We use notation $\frac{\Nv_{t}}{K}$ for the set which consists of all elements in $\Nv_t$ divided by $K$. Define function $f_s(\Nv_t,\lfloor\alpha K\rfloor)$ to represent the number of sub-processes set active at time $t$ in state $s$, that is,
\begin{align}
f_s(\Nv_t,\lfloor\alpha K\rfloor,H_1,H_2) =& \mathlarger{\mathlarger{\mathbbm{1}_{([\lfloor\alpha K\rfloor-\sum_{s'\in U_{t}(s)}N_{t}(s')]^+ \geq \sum_{s'\in V_{t}(s)}N_{t}(s'))}}}
*N_{t}(s)\nonumber\\
&+\mathlarger{\mathlarger{\mathbbm{1}_{([\lfloor\alpha K\rfloor-\sum_{s'\in U_{t}(s)}N_{t}(s')]^+ < \sum_{s'\in V_{t}(s)}N_{t}(s'))}}}*\nonumber\\
&\hspace{4mm}\mathlarger{\mathlarger{\mathbbm{1}_{(\lfloor\alpha K\rfloor-\sum_{s'\in U_{t}(s)}N_{t}(s')>0)}}}b_s(\Nv_t,\lfloor\alpha K\rfloor), 
\end{align}
where $b_s(\Nv_t,\lfloor\alpha K\rfloor)$ represent the number of sub-processes set active when tie-breaking is needed, that is,
\begin{align}
b_s(\Nv_t,\lfloor\alpha K\rfloor) = & \mathlarger{\mathlarger{\mathbbm{1}_{(\sum_{s'\in V_{t}(s)}\rho(s',1,t)>0)}}} * \nonumber\\
&\hspace{4mm}\left(\min\Big\{\Big\lfloor (\lfloor\alpha K\rfloor-\sum_{s'\in U_{t}(s)}N_{t}(s'))\frac{\rho(s,1,t)}{\sum_{s'\in V_{t}(s)}\rho(s',1,t)}\Big\rfloor,N_{t}(s)\Big\}+H_1\right)\nonumber\\
& + \mathlarger{\mathlarger{\mathbbm{1}_{(\sum_{s'\in V_{t}(s)}\rho(s',1,t)=0)}}}\left(\Big\lfloor (\lfloor\alpha K\rfloor-\sum_{s'\in U_{t}(s)}N_{t}(s'))\frac{N_{t}(s)}{\sum_{s'\in V_{t}(s)}N_{t}(s')}\Big\rfloor+H_2\right),
\end{align}
where $H_1$ and $H_2$ are random variables due to the rounding rules in Algorithm \ref{ag:tiebreak}, and are dependent on $K$.
We also define function
\begin{equation}
g_s(\Pv_t) = 
\begin{cases}
\min\{P_t(s),[\alpha - \sum_{s'\in U_t(s)}P_t(s')]^+\frac{\rho(s,1,t)}{\sum_{s'\in V_{t}(s)}\rho(s',1,t)}\} \;\;\; \text{if } \sum_{s'\in V_{t}(s)}\rho(s',1,t) >0 \\
\min\{P_t(s),[\alpha - \sum_{s'\in U_t(s)}P_t(s')]^+\frac{P_t(s)}{\sum_{s'\in V_{t}(s)}P_t(s')}\} \;\;\; \text{if } \sum_{s'\in V_{t}(s)}\rho(s',1,t) =0 ,
\end{cases}
\end{equation}
This proof will be accomplished by the following three lemmas, whose proof is given in Appendix \ref{ap:movein},\ref{ap:equiv},\ref{ap:ppi}
\begin{lemma}\label{th:movein}
$$\lim_{K\rightarrow\infty}f_s(\frac{\Nv_{t+1}}{K},\frac{\lfloor\alpha K\rfloor}{K}) = f_s(\Pv_{t+1},\alpha), a.s$$.
\end{lemma}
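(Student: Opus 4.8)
The plan is to read off that $M_{t+1}(s)/K$ equals $f_s(\Nv_{t+1},\lfloor\alpha K\rfloor,H_1,H_2)/K$ by the very definition of $f_s$, and to show this converges almost surely to $f_s(\Pv_{t+1},\alpha)$. The engine driving the argument is the already-established convergence $N_{t+1}(s')/K\to P_{t+1}(s')$ a.s.\ for every $s'$, which is exactly \eqref{conv1} for time $t+1$, together with $\lfloor\alpha K\rfloor/K\to\alpha$. The first step is to observe that, away from the two rounding terms $H_1,H_2$ and the floor operations, $f_s$ is positively homogeneous of degree one in the pair $(\Nv_{t+1},\lfloor\alpha K\rfloor)$: every indicator in $f_s$ and $b_s$ compares quantities that are themselves homogeneous of degree one, so its argument is scale invariant, while each multiplicand ($N_{t+1}(s)$, the $\min$-term, and the tie-breaking ratios) scales linearly. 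Dividing by $K$ thus turns $M_{t+1}(s)/K$ into $f_s$ evaluated at the normalized arguments, up to (i) floor errors, each $O(1/K)$ after normalization, and (ii) the contributions $H_1/K,H_2/K$, which vanish because $H_1,H_2$ are rounding increments bounded uniformly in $K$ by Remark~\ref{remark:rounding}.

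Next I would pass to the limit term by term. The continuous (piecewise-linear) factors, namely $N_{t+1}(s)/K$, the floored $\min$-term, and the tie-breaking ratio $\rho(s,1,t+1)/\sum_{s'\in V_{t+1}(s)}\rho(s',1,t+1)$, converge almost surely to their evaluations at $\Pv_{t+1}$ and $\alpha$ by \eqref{conv1} and the continuous-mapping theorem, with the floors disappearing in the normalized limit. The delicate factors are the indicators, which are discontinuous, so convergence of arguments does not transfer for free. I would split on the location of the \emph{deterministic} limit $\Pv_{t+1}$. In the interior case, where each comparison such as $[\alpha-\sum_{s'\in U_{t+1}(s)}P_{t+1}(s')]^+\ge\sum_{s'\in V_{t+1}(s)}P_{t+1}(s')$ or $\alpha-\sum_{s'\in U_{t+1}(s)}P_{t+1}(s')>0$ holds with \emph{strict} inequality, the almost sure convergence of the normalized counts guarantees that for all large $K$ the samples satisfy the same strict inequalities; the indicators then stabilize at their limiting values, and combining with the convergence of the continuous factors yields the claim.

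The main obstacle is the boundary case, where $\Pv_{t+1}$ sits exactly on the jump of one of the indicators (most importantly where $[\alpha-\sum_{s'\in U_{t+1}(s)}P_{t+1}(s')]^+=\sum_{s'\in V_{t+1}(s)}P_{t+1}(s')$), so the finite-$K$ samples may straddle both sides and the indicator factors need not converge. Here the resolution is to show that the two candidate branches of $f_s$, the ``full allocation'' branch contributing $N_{t+1}(s)/K\to P_{t+1}(s)$ and the ``tie-breaking'' branch contributing the normalized $\min$-term, converge to the \emph{same} value at the boundary, so the limit exists and equals $f_s(\Pv_{t+1},\alpha)$ regardless of which side the sample approaches from. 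To establish this agreement I would use the occupation-measure identity $\rho(s,1,t+1)=P_{t+1}(s)\,\subp^{**}(s,1,t+1)$ together with Lemma~\ref{th:p1}: at the boundary one has $\alpha-\sum_{s'\in U_{t+1}(s)\cup V_{t+1}(s)}P_{t+1}(s')\ge 0$, so Lemma~\ref{th:p1}(1) forces $\subp^{**}(s',1,t+1)=1$ for every $s'\in V_{t+1}(s)$, whence $\sum_{s'\in V_{t+1}(s)}\rho(s',1,t+1)=\sum_{s'\in V_{t+1}(s)}P_{t+1}(s')$ and the $\min$-term collapses to $P_{t+1}(s)$. Carrying out this boundary verification for every indicator appearing in $f_s$ and $b_s$ (including the degenerate cases $\alpha-\sum_{s'\in U_{t+1}(s)}P_{t+1}(s')=0$ and $\sum_{s'\in V_{t+1}(s)}\rho(s',1,t+1)=0$, where one appeals to Lemma~\ref{th:eq} as well) is the technical heart of the lemma.
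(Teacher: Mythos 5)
Your proposal follows essentially the same route as the paper's own proof: split on whether the deterministic limit lies strictly inside or exactly on the jump set of the indicators, use the almost-sure convergence of $\Nv_{t+1}/K$ (and $\lfloor\alpha K\rfloor/K\to\alpha$) to stabilize the indicators in the strict-inequality cases, control the rounding terms $H_1,H_2$ via Remark~\ref{remark:rounding}, and at the boundary show the two branches of $f_s$ agree in the limit by combining Lemma~\ref{th:p1} with the identity $\rho(s,1,t+1)=P_{t+1}(s)\,\subp^{**}(s,1,t+1)$. One small caution: at the boundary with $\alpha-\sum_{s'\in U_{t+1}(s)}P_{t+1}(s')<0$ the hypothesis of Lemma~\ref{th:p1}(1) fails and instead $\sum_{s'\in V_{t+1}(s)}P_{t+1}(s')=0$, so that degenerate sub-case must be settled directly (both branches vanish), exactly as the paper's first sub-case of its third case does.
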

\begin{lemma}\label{th:equiv}
$$f_s(\Pv_t,\alpha) = g_s(\Pv_t)$$
\end{lemma}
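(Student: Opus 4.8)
\textbf{Proof plan for Lemma~\ref{th:equiv}.}
The plan is to treat $f_s(\Pv_t,\alpha)$ as the continuum object obtained by substituting $P_t(\cdot)$ for $N_t(\cdot)$ and $\alpha$ for $\lfloor\alpha K\rfloor$ and dropping the rounding corrections $H_1,H_2$; this is legitimate because, by Remark~\ref{remark:rounding}, $H_1$ and $H_2$ are bounded by $1$, so their scaled contributions vanish in the limit that defines $f_s(\Pv_t,\alpha)$ through Lemma~\ref{th:movein}. The first ingredient I would record is the occupation-measure marginal identity $\rho(s',1,t)=P_t(s')\,\subp^{**}(s',1,t)$ for every $s'$ and $t$, which follows from $P_t(s')=\sum_{a}\rho(s',a,t)$ together with the construction of $\subp^{**}$ in \eqref{df:rho} (and holds trivially when the marginal is $0$). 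In particular, writing $W:=\sum_{s'\in V_t(s)}\rho(s',1,t)$, we have $W=\sum_{s'\in V_t(s)}P_t(s')\,\subp^{**}(s',1,t)$.

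Next I would introduce the abbreviations $\Delta:=\alpha-\sum_{s'\in U_t(s)}P_t(s')$, $A:=\Delta^{+}$, and $B:=\sum_{s'\in V_t(s)}P_t(s')$. The difficulty is that $f_s$ branches on the comparison $A\geq B$ versus $A<B$, whereas $g_s$ branches on $W>0$ versus $W=0$; these two dichotomies are not literally the same, and the whole content of the lemma is that they are compatible once $\subp^{**}$ is pinned down on the tied set $V_t(s)$. I would pin it down with Lemma~\ref{th:p1}, using that all $s'\in V_t(s)$ share the same sets $U_t$ and $V_t$. Part~(1) has hypothesis $\Delta\geq B$, which holds whenever $A\geq B$ and $B>0$ (since then $A=\Delta$), and it forces $\subp^{**}(s',1,t)=1$ on all of $V_t(s)$, hence $W=B$. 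Part~(2) shows that $A=0$ (i.e.\ $\Delta\leq0$) forces $\subp^{**}(s',1,t)=0$ on all of $V_t(s)$, hence $W=0$; this in particular rules out the combination $A=0$ with $W>0$.

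The proof then reduces to a short case analysis. When $A\geq B$ and $B>0$, I get $f_s=P_t(s)$ from the first indicator, while the previous step gives $W=B>0$ and $\rho(s,1,t)=P_t(s)$, so $g_s=\min\{P_t(s),A\,P_t(s)/B\}=P_t(s)$ because $A/B\geq1$; the degenerate subcase $B=0$ forces $P_t(s)=0$ and both sides vanish. When $A<B$ (so necessarily $B>0$), $f_s$ reduces to $\mathbbm{1}_{(A>0)}$ times the tie-breaking term, and I would split on $W$: if $W>0$ the impossibility of $A=0$ makes $\mathbbm{1}_{(A>0)}$ redundant and $f_s=\min\{A\,\rho(s,1,t)/W,\,P_t(s)\}=g_s$ directly; if $W=0$ then $A\,P_t(s)/B<P_t(s)$ because $A<B$, so $\min\{P_t(s),A\,P_t(s)/B\}=A\,P_t(s)/B$, which also equals $\mathbbm{1}_{(A>0)}\,A\,P_t(s)/B=f_s$. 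I expect the main obstacle to be exactly the reconciliation of the two case splits carried out in the second paragraph: verifying, via Lemma~\ref{th:p1}, that $W$ and $B$ agree (or jointly vanish) in precisely the regimes needed so that the $\min$ in $g_s$ always selects the same branch that the indicators in $f_s$ prescribe.
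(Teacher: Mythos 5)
Your proposal is correct and follows essentially the same route as the paper's own proof: both split on the comparison $[\alpha-\sum_{s'\in U_t(s)}P_t(s')]^+ \gtrless \sum_{s'\in V_t(s)}P_t(s')$, invoke Lemma~\ref{th:p1} to pin down $\subp^{**}$ on the tied set, and use the identity $\rho(s,1,t)=P_t(s)\subp^{**}(s,1,t)$ to show the minimum in $g_s$ selects the branch prescribed by the indicators in $f_s$. Your write-up is in fact somewhat more careful than the paper's (which, e.g., misstates $\subp^{**}(s,1,t)=1$ as ``$P_t(s)=1$'' and glosses over the reconciliation of the $A\gtrless B$ split with the $W>0$ versus $W=0$ split), but the underlying argument is the same.
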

\begin{lemma}\label{th:ppi}
$$g_s(\Pv_t) = P_t(s)\pi^{**}(s,1,t)$$
\end{lemma}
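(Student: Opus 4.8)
The plan is to reduce the identity to a statement purely about the occupation measure $\rho$. Because $\pi^{**}$ is constructed from $\rho$ in \eqref{df:rho} and $P_t$ is precisely the state distribution this policy induces from $s_1$ (both $P_t$ and the state marginal $\sum_{a\in\subactions}\rho(\cdot,a,t)$ obey the same forward recursion, as seen in \eqref{ntlimit}), we have $P_t(s)=\sum_{a\in\subactions}\rho(s,a,t)$ and hence the identity $\rho(s,1,t)=P_t(s)\,\pi^{**}(s,1,t)$ for every $s$ and $t$ (when $\sum_{a}\rho(s,a,t)=0$ both sides vanish). Substituting this into the right-hand side, it suffices to prove $g_s(\Pv_t)=\rho(s,1,t)$, and I would do so by the same two-way split that defines $g_s$, namely whether $\sum_{s'\in V_t(s)}\rho(s',1,t)$ is positive or zero, evaluating the $\min$ explicitly in each branch.

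For the branch $\sum_{s'\in V_t(s)}\rho(s',1,t)>0$, note first that this forces the common index $\beta_t(s)\ge\lambda^*_t$, since by Lemma~\ref{th:eq} a common index below $\lambda^*_t$ would make every tied state inactive. Write $W=\alpha-\sum_{s'\in U_t(s)}P_t(s')$ for the budget remaining after the strictly-higher-index states. The crux is the budget accounting: the monotone ``bang-bang'' structure implied by Lemma~\ref{th:eq} (states with index above $\lambda^*_t$ have $\pi^{**}(\cdot,1,t)=1$, hence $\rho(\cdot,1,t)=P_t(\cdot)$, while states with index below $\lambda^*_t$ have $\rho(\cdot,1,t)=0$), combined with the LP constraint $\sum_{s'}\rho(s',1,t)=\alpha$, yields $\sum_{s'\in V_t(s)}\rho(s',1,t)=W$ when $\beta_t(s)=\lambda^*_t$, and $W\ge\sum_{s'\in V_t(s)}P_t(s')$ when $\beta_t(s)>\lambda^*_t$. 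In the first situation the fraction $[W]^+\,\rho(s,1,t)\big/\sum_{s'\in V_t(s)}\rho(s',1,t)$ collapses to $\rho(s,1,t)\le P_t(s)$, so the $\min$ equals $\rho(s,1,t)$; in the second, $\rho(s,1,t)=P_t(s)$ while the fraction is $\ge P_t(s)$, so the $\min$ again equals $\rho(s,1,t)$.

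For the branch $\sum_{s'\in V_t(s)}\rho(s',1,t)=0$, every tied state (in particular $s$) has $\rho(\cdot,1,t)=0$, so the target value is $\rho(s,1,t)=0$ and it suffices to show $g_s(\Pv_t)=0$. Since the tied states carry no activation, the same budget accounting forces $W\le 0$: either $\beta_t(s)<\lambda^*_t$, in which case $U_t(s)$ already contains all states of index at least $\lambda^*_t$ and the active mass $\alpha$ cannot exceed $\sum_{s'\in U_t(s)}P_t(s')$, or $\beta_t(s)=\lambda^*_t$ with no threshold mass activated, giving $\alpha=\sum_{s'\in U_t(s)}P_t(s')$; this is exactly the hypothesis of Lemma~\ref{th:p1}(2). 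Hence $[W]^+=0$ and $g_s(\Pv_t)=0$. The only degenerate subcase, $\sum_{s'\in V_t(s)}P_t(s')=0$, makes $P_t(s)=0$ and the claim trivial.

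The hard part will be the crux budget identity $\sum_{s'\in V_t(s)}\rho(s',1,t)=\max\{W,0\}$, as it is precisely what makes the two opaque $\min$-expressions in $g_s$ simplify. Establishing it cleanly requires first extracting the index-threshold structure of the LP optimizer from Lemma~\ref{th:eq} and then reconciling it with the budget constraint and with both parts of Lemma~\ref{th:p1}; keeping the boundary value $\beta_t(s)=\lambda^*_t$, the strict cases where $\beta_t(s)$ lies strictly above or below $\lambda^*_t$, and the degenerate zero-mass case mutually consistent across the two branches of $g_s$ is where the bookkeeping is most delicate.
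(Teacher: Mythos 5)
Your proof is correct, and it reaches the result by a genuinely different organization than the paper's. Both arguments rest on the same raw ingredients --- the occupation-measure identity $\rho(s,1,t)=P_t(s)\pi^{**}(s,1,t)$, the threshold structure given by Lemma~\ref{th:eq}, and the budget constraint $\sum_{s'}\rho(s',1,t)=\alpha$ --- but the paper splits on which argument of the $\min$ in $g_s$ is attained and then on the sign of $\alpha-\sum_{s'\in U_t(s)}P_t(s')$, resolving each cell through Lemma~\ref{th:p1} and short contradiction arguments (e.g., if $0<\pi^{**}(s,1,t)<1$ then the fraction collapses to $\rho(s,1,t)<P_t(s)$, contradicting that $P_t(s)$ attains the minimum). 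You instead reduce the target to $g_s(\Pv_t)=\rho(s,1,t)$ and split on where the common index $\beta_t(s)$ sits relative to $\lambda^*_t$, doing the budget accounting inline: $\sum_{s'\in V_t(s)}\rho(s',1,t)=\alpha-\sum_{s'\in U_t(s)}P_t(s')$ at the threshold, $\alpha-\sum_{s'\in U_t(s)}P_t(s')\geq\sum_{s'\in V_t(s)}P_t(s')$ strictly above it, and $[\alpha-\sum_{s'\in U_t(s)}P_t(s')]^+=0$ when the tied states carry no activation. In effect you re-derive the content of Lemma~\ref{th:p1} rather than invoke it; this costs a little duplication but buys a direct evaluation of each $\min$, with no contradiction arguments, and makes visible in one place why both branches of $g_s$ collapse to $\rho(s,1,t)$. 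Two minor cautions, neither a gap: the universal ``budget identity'' $\sum_{s'\in V_t(s)}\rho(s',1,t)=\max\{\alpha-\sum_{s'\in U_t(s)}P_t(s'),\,0\}$ announced in your closing paragraph is not true when $\beta_t(s)>\lambda^*_t$ (there only the inequality $\geq$ holds, which is what your proof body correctly uses), and, exactly like the paper, you dispose of the degenerate $0/0$ case $\sum_{s'\in V_t(s)}P_t(s')=0$ by noting that $P_t(s)=0$ makes the claim trivial.
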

Combining the three lemmas above we have  $$\lim_{K\rightarrow\infty}\frac{M_{t+1}(s)}{K}=\lim_{K\rightarrow\infty}f_s(\frac{\Nv_{t+1}}{K},\frac{\lfloor\alpha K\rfloor}{K}) =g_s(\Pv_{t+1})=P_{t+1}(s)\pi^{**}(s,1,t+1).$$ 
$\square$
\end{proof}

Finally, we prove Theorem \ref{th:asp} by leveraging the results from Theorem \ref{th:conv}.
\begin{proof}{Proof of Theorem \ref{th:asp}}
$\hat{\allp} \in \allpset_{\lfloor \alpha K\rfloor,K}$ implies $Z(\hat{\allp},\lfloor \alpha K\rfloor,K) \leq \max_{\allp\in\allpset_{\lfloor \alpha K\rfloor,K}}Z(\allp,\lfloor \alpha K\rfloor,K)$.  Thus,
\begin{equation*}
\lim_{K\rightarrow\infty}\frac{1}{K}Z(\hat{\allp},\lfloor \alpha K\rfloor,K) \leq  \lim_{K\rightarrow\infty}\frac{1}{K}\sup_{\allp\in\allpset_{\lfloor \alpha K\rfloor,K}}Z(\allp,\lfloor \alpha K\rfloor,K).
\end{equation*}
On the other hand,
\begin{align*}
\lim_{K\rightarrow\infty}\frac{1}{K}Z(\hat{\allp},\lfloor \alpha K\rfloor,K) 
=&\lim_{K\rightarrow\infty}\frac{1}{K}\Eb^{\hat{\allp}}\left[\sum_{t=1}^{T}\sum_{s\in\substates}r_t(s,1) M_{t}(s)+r_t(s,0) (N_{t}(s)-M_{t}(s))\right]\\
=&\sum_{t=1}^{T}\sum_{s\in\substates}r_t(s,1) \lim_{K\rightarrow\infty}\frac{1}{K}\Eb^{\hat{\allp}}\left[M_{t}(s)\right]+r_t(s,0) \lim_{K\rightarrow\infty}\frac{1}{K}\Eb^{\hat{\allp}}\left[N_{t}(s)-M_{t}(s)\right]\\
=&\sum_{t=1}^{T}\sum_{s\in \substates}\left[r_t(s,1) \rho(s,1,t)+r_t(s,0) \rho(s,0,t) \right]\\
=&\sum_{t=1}^{T}\sum_{s\in \substates}\left[r_t(s,1) \rho(s,1,t)+r_t(s,0) \rho(s,0,t) \right] - \mathbb{E}^{\subp^{**}}\left[\sum_{t}\lambda_t \left(A_t-\alpha\right)\right]\\
=& Q(\lambdav^*)+\alpha \sum\lambda^*_t\\
=& \lim_{K\rightarrow\infty}\frac{1}{K}(KQ(\lambdav^*)+\lfloor \alpha K \rfloor\sum\lambda^*_t)\\
=& \lim_{K\rightarrow\infty}\frac{1}{K} P(\lambdav^*,\lfloor \alpha K \rfloor, K)\\
\geq& \lim_{K\rightarrow\infty}\frac{1}{K}\sup_{\allp\in\allpset_{\lfloor \alpha K\rfloor,K}}Z(\allp,\lfloor \alpha K\rfloor,K).
\end{align*}
Here, the third line follows by Theorem \ref{th:conv} and the fact that both $N_t(s)$ and $M_t(s)$ are bounded and hence uniformly integrable random variables (for uniformly integrable random variables, convergence almost surely implies convergence in expectation). The fourth line holds because $\subp^{**}$ takes the active action at each time with probability $\alpha$. 
The fifth line follows from Lemma~\ref{th:decom}, where we have augmented the notation for $P$ to include the values of $m$ and $K$ assumed.
The sixth line follows from Lemma~\ref{th:up}.

Finally, sandwiching the two inequalities gives the desired result.
$\square$
\end{proof}

\section{Numerical Experiments}\label{sec:num}
In this section we present numerical experiments for two problems: the finite-horizon multi-arm bandit with multiple pulls per period,and subset selection \citep{chen2008,Law1985}. These experiments demonstrate numerically that our index policy is indeed asymptotically optimal. We also compare the finite-time performance of our policy to other policies from the literature.  
Although our previously provided theoretical results do not apply to finite $K$, we see that our index policy performs strictly better than all benchmarks considered in both of the problems.

\subsection{Multi-armed bandit}\label{subsec:mab}
In our first experiment, we consider a Bernoulli multi-armed bandit problem with a finite time horizon $T=6$, and multiple pulls per time period. A player is presented with $K$ arms and may select $m=\lfloor K/3\rfloor$ of them to pull at every time st. Each arm pulled returns a reward of $0$ or $1$. 
The player's goal is to maximize her total expected reward. We take a Bayesian-optimal approach and impose a Beta(1,1) prior on each of the arm. The values of the state then correspond to the posterior parameters of the K arms.

For comparison, we include results from an upper confidence bound (UCB) algorithm with pre-trained confidence width. At every time step, we compute $\mu_i + \alpha*\delta_i$ for each arm $i$, where $mu_i$ and $\delta_i$ are the sample mean and standard deviation of arm $i$. We pre-train $\alpha$ by running the UCB algorithm on a different set of data (but simulated with the same distribution) with values of $\alpha$ ranging from 0 to 5 and then set $\alpha$ to the value that gives the best performance. 

Figure \ref{fig:mab} plots the reward per arm (expected total reward divided by $K$) 
against $K$, for $K=12,120,1200,12000$. 
The red dashed line represents the upper bound computed using $P(\lambdav^*)$. For each policy, circles show the sample mean of the total reward per arm, and vertical bars indicate a 95\% confidence interval for the expected total reward per arm.  The UCB policy's sample means are connected by a dashed green line, and the index policy's are connected by a dashed black line.
Values are calculated using 5000 replications.

The index policy consistently outperforms the UCB policy. As $K$ grows large, the confidence interval for the index policy's total performance per arm overlaps with the upper bound, which numerically attests to the accuracy of Theorem \ref{th:asp} and illustrates the rate of convergence.

\begin{figure}\label{fig:mab}
\begin{center}
\includegraphics[scale=0.3]{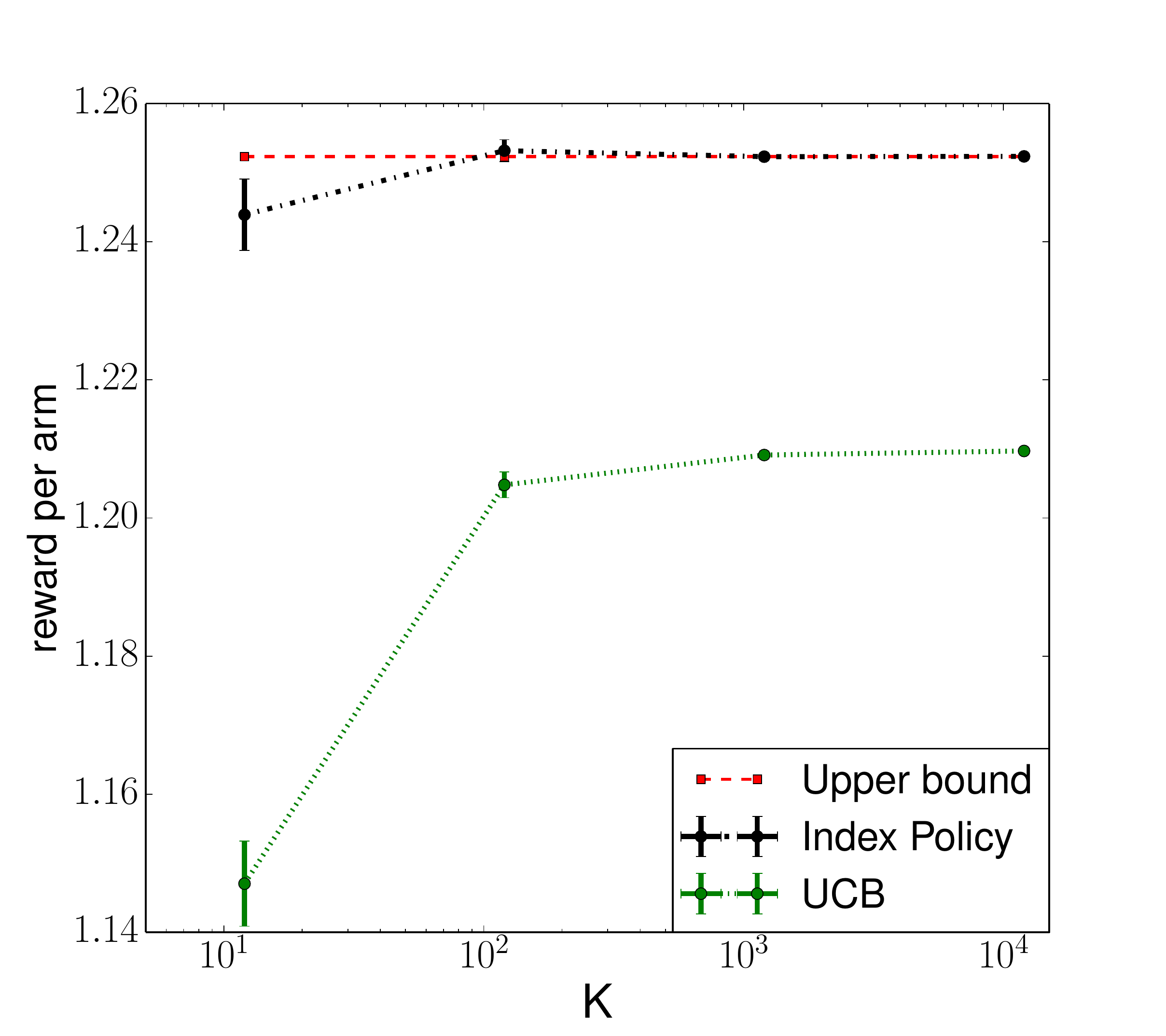}
\caption{upper bound and simulation results of MAB}
\end{center}
\end{figure}

\subsection{Subset selection problem}
In the third experiment, we consider a subset selection problem in ranking and selection whose goal is to identify $m$ best designs out of $K$ designs, each with some underlying distribution $\theta_x$. This problem is considered in \citep{chen2008} as well as \citep{Law1985}. We assume $\bar{m}$ parallel computing resources are available; at each time step we select $\bar{m}$ out of $K$ design to evaluate. After $T$ rounds of evaluation, we select $m$ best designs. In this numerical study, we set $T=4$, $\frac{m}{K}=0.3$ and $\frac{\bar{m}}{K}=0.5$. We consider the situation when the outcomes of evaluation are binary. But note that our model can handle any real-valued outcomes.

Below is how we formulate this problem as an RMAB:
\begin{equation}\label{prime_ss}
\begin{aligned}
& \underset{\allp\in\allpset}{\text{maximize}}
& & \mathbb{E}^{\allp}\left[\sum_{t=1}^{T+1}\allr_t\big(\allstater_t,\allar_t\big)\right] \\
& \text{subject to}
& & P^{\allp}(|\allar_t|=m)=1, \; \; \text{for }t=T+1,\\
& & & P^{\allp}(|\allar_t|=\bar{m})=1, \; \; \text{for }1\leq t\leq T,
\end{aligned}
\end{equation}
where $R_t(\allstater_t,\allar_t)=0$ when $t\leq T$, and $R_t(\allstater_t,\allar_t)=\sum_{x=1}^{K}\mathbb{E}[\theta_x|\allstater_{t,x}]$ when $t= T+1$. We start with a uniform prior for each design. Note that in this formulation, the number of sub-processes allowed to set active varies with time horizon. Although this number, denoted as $m$, is fixed in Theorem \ref{th:asp}, we can show that the result still holds for a time dependent $m_t$.

We compare the performance of our policy against the \textit{OCBA-m} selection procedure proposed in \citep{chen2008}. Since \citep{chen2008} considers a slightly different setting in which a policy maker can evaluate a design more than once in a time step, we modify the procedure slightly to fit our setup: instead of sampling according to the number of times dictated by the algorithm, we rank the designs by their desired number of samples, and simulate the first $\bar{m}$ of them.  Moreover, since OCBA-m begins with a cold-start, for fair comparison, we allocate a sample corresponding to a positive outcome and a sample corresponding to a negative outcome to each of the design in addition to the total $T\bar{m}$ samples (Recall for the index policy we start with a uniform prior). We also use the UCB policy as a standard of comparison. The implementation of the UCB policy is similar to the one in section \ref{subsec:mab}.

The simulation results show that all the three policies perform closely when $K$ is small, with OCBA-m policy having a slight edge for $K=10$. From $K=100$ onwards, the index policy consistently outperforms the other two. In addition, the gap between the upper bound and the index policy vanishes as K becomes large, while the gaps between the upper bound and the other two policies remain constant.
\begin{figure}\label{fig:os}
\centering
\includegraphics[scale=0.3]{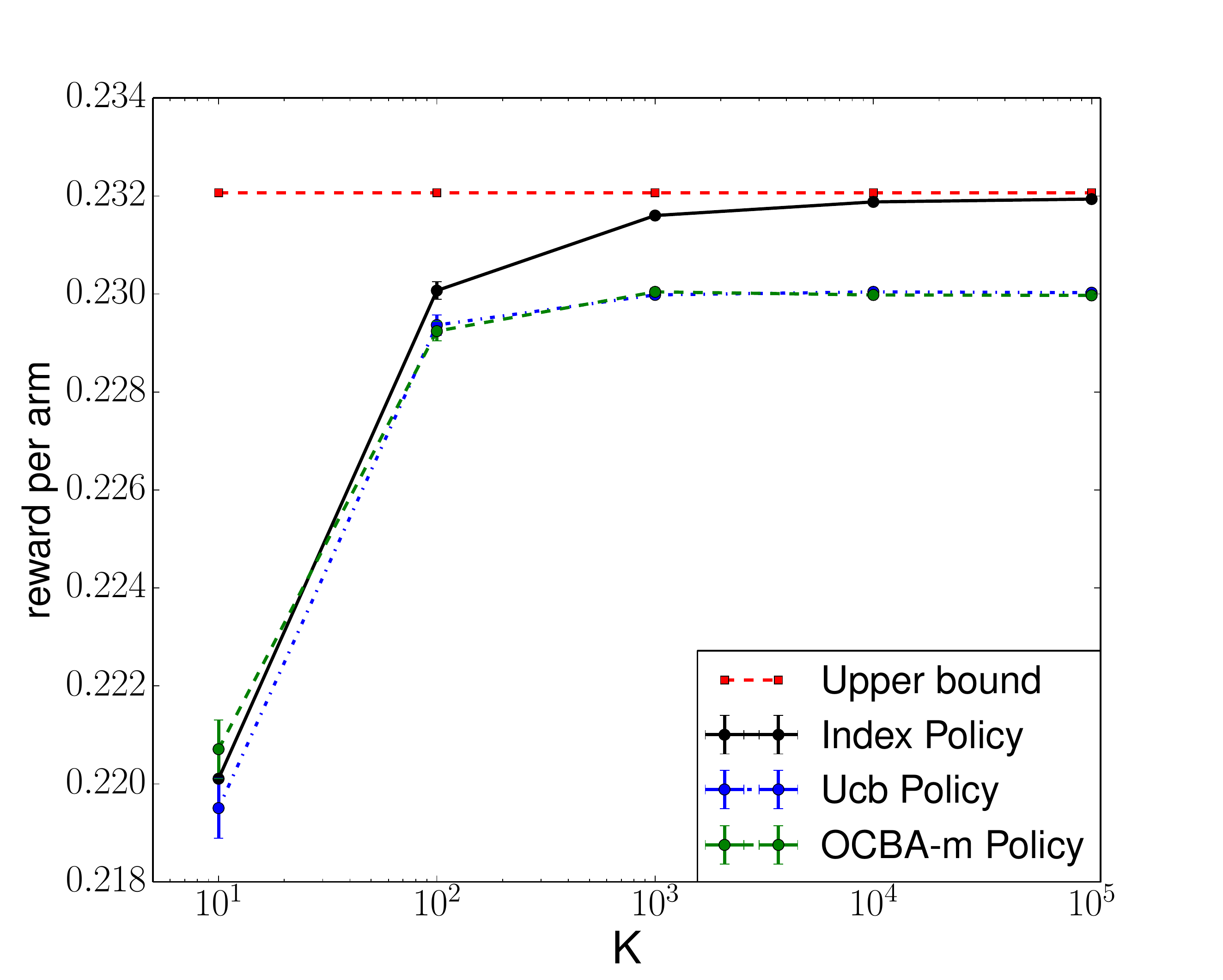}
\caption{Upper bound and simulation result of subset selection}
\end{figure}

\section{Conclusion}
In this paper we propose an index-based policy for finite horizon RMAB that is computational tractable, and prove that it is asymptotically optimal in the same limit as considered by Whittle. We also show that the numerical performance of this index-based policy beats the state-of-art. For future work, we conjecture that our results, including the formulation of the policy and the asymptotic optimality, can be extended to the following situations:
\begin{enumerate}
\item Multiple actions associated with a state, instead of an  active and a passive action in the current formulation;
\item A total budget constraint over the entire time horizon, in addition to budget constraint at every time step.
\item Infinite state space.
\end{enumerate}

\appendix
\section{Notation}\label{ap:notations}
\begin{centering}
\begin{longtable}{| p{.25\textwidth} | p{.75\textwidth} |}
\hline
$\allstates,\allactions,\allpr^{\cdot}(\cdot|\cdot),\allr(\cdot,\cdot)$  & State space, action space, transition kernal and reward function of the original MDP.\\ 
\hline
$\substates,\subactions,\subpr^{\cdot}(\cdot,\cdot),\subr(\cdot,\cdot)$& State space, action space, transition kernal and reward function of the sub-processes of the original MDP.\\ 
\hline
$\allstate$, $\allstater$ & Generic element and random element of $\allstates$.\\ %$\allstate_t$ emphasizes that the state is at time $t$.\\ 
\hline
$\substate$, $\substater$ & Generic element and random element of $\substates$. \\%$s_{t,x}$ emphasizes that the state is at time $t$.\\ 
\hline
$K$ & Number of sub-processes.\\ 
\hline
$T$& Time horizon\\ 
\hline
$m$& Number of sub-processes to be set active per time step\\ 
\hline
$\allpset$& Set of all Markov policies of the original MDP\\ 
\hline
%$\allpset$& Set of all feasible Markov policies of the original MDP\\ 
%\hline
$\allp(\allstate,\allaction,t)$ & the probability of choosing action $\allaction$ in state $\allstate$ under policy $\allp$ at time $t$.\\
\hline
$\subpset$& Set of all Markov policies of the sub-MDP\\ 
\hline
$\subp(\substate,\subaction,t)$ & the probability of choosing action $\subaction$ in state $\substate$ under policy $\subp$ at time $t$.\\
\hline
$\subpset^*(\lambdav)$& Set of Markov deterministic optimal policies for sub-MDP $Q(\lambdav)$, given $\lambdav\in\Rb^{T}$.\\
\hline
$\subp^{\lambdav}$& An element in $\subpset^{\lambdav}$.\\
\hline
$\allp^{\lambdav}$& A deterministic optimal policy for the relaxed problem which obtained by the decomposition method in Lemma \ref{th:decom}, given $\lambdav\in\Rb^{T}$.\\
\hline
$\Pv(\lambdav)$& Optimal value of the relaxed problem, given $\lambdav\in\Rb^{T}$.\\ 
\hline
$\lambdav^*$& An value that attains $\inf_{\lambdav}\mathbf{P}(\lambdav)$\\ 
\hline
$\subp^{**}$& An optimal markov policy for the sub-MDP which satisfies $\Eb^{\subp^{**}}[\subar_t]=\frac{m}{K}$, $\forall 1\leq t\leq T $.\\ 
\hline
$\hat{\allp}$& The index based policy proposed by this paper\\ 
\hline
$\bar{\beta_t}$& Indices of the tied sub-processes.\\ 
\hline
$I_t$& The set of states occupied by the tied sub-processes.\\ 
\hline
$N_t(s)$& The number of sub-processes in state $s$ at time $t$ under index policy $\hat{\allp}$.\\ 
\hline
$P_t(s)$& The probability of an individual sub-process landing in state $s$ at time $t$ under $\subp^{**}$. $P_t(s)=P^{\subp^{**}}[S_t=s]$\\ 
\hline
$\alpha$& The ratio between the number of sub-processes set active, $m$, and the total number of sub-processes $K$.\\ 
\hline
$M_t(s)$& The number of sub-processes in state $s$ at time $t$ that are set active under our index policy $\hat{\allp}$.\\ 
\hline
$Y_t(s',s)$ & The number of sub-processes set active by $\hat{\allp}$ in $s'$ at time $t$ which transition to state $s$ at time $t+1$. \\
\hline
$X_t(s',s)$ & The number of sub-processes set inactive by $\hat{\allp}$ in $s'$ at time $t$ which transition to $s$ at time $t+1$.\\
\hline
$U_t(s)$ & The set of states whose indices are greater than the index of state $s$ at time $t$. $U_t(s) = \{s''\in \substates:\beta_{t}(s'')>\beta_{t}(s)\}$. \\
\hline
$V_t(s)$ & The set of states whose indices are equal to that of $s$.

 $V_t(s) = \{s''\in\substates:\beta_t(s'')=\beta_t(s)\}$.\\
\hline
$|\mathbf{v}|$ &  An operation that sums all the elements in vector $\mathbf{v}$.  \\
\hline
$H_1$, $H_2$ & random variables due to the rounding rules in Algorithm \ref{ag:tiebreak}\\
\hline
$Z(\allp,m,K)$ & the expected reward of the original MDP obtained by policy $\allp$\\
\hline
\caption{List of notation}
\label{notations}
\end{longtable}
\end{centering}
\section{Upper Bound}\label{ap:up}
\proof{Proof of Lemma \ref{th:up}}
Let $\allpset_P = \{\allp\in\allpset: P^{\allp}(|\allar_t|=m)=1,\;\forall 1\leq t\leq T\}$. Let $\allpset_E = \{\allp\in\allpset:\mathbb{E}^{\allp}[|\allar_t|]=m,\;\forall 1\leq t\leq T\}$. For any $\lambdav\in\mathbb{R}^{T}$, we have
\begin{align*}
& P(\lambdav)\\
 =& \max_{\allp\in \allpset}\mathbb{E}^{\allp}\left[\sum_{t=1}^{T}\allr_t\big(\allstater_t,\allar_t\big)\right]-\mathbb{E}^{\allp}\left[\sum_{t}\lambda_t(|\allar_t|-m) \right] \\
\geq & \max_{\allp\in \allpset_E}\mathbb{E}^{\allp}\left[\sum_{t=1}^{T}\allr_t\big(\allstater_t,\allar_t\big)\right]-\mathbb{E}^{\allp}\left[\sum_{t}\lambda_t(|\allar_t|-m) \right] \\
= & \max_{\allp\in \allpset_E}\mathbb{E}^{\allp}\left[\sum_{t=1}^{T}\allr_t\big(\allstater_t,\allar_t\big)\right]\\
\geq & \max_{\allp\in \allpset_P}\mathbb{E}^{\allp}\left[\sum_{t=1}^{T}\allr_t\big(\allstater_t,\allar_t\big)\right],
\end{align*}
which is the optimal value of the original MDP. The first inequality is due to $\allpset_E\subseteq\allpset$. The first equality is due to the fact that any policy $\allp$ in $\allpset_E$ satisfies $\mathbb{E}^{\allp}[\allar_t|]=m$. The last inequality is due to $\allpset_P\subseteq\allpset_E$.
\endproof
\section{Decomposition}\label{ap:decom}
\proof{Proof of Lemma \ref{th:decom}}: 
\begin{align*}
 & \max_{\allp\in \allpset}\mathbb{E}^{\allp}\left[\sum_{t=1}^{T}\allr_t\big(\allstater_t,\allar_t\big)\right]-\mathbb{E}^{\allp}\left[\sum_{t=1}^T\lambda_t\big(|\allar_t|-m\big)\right] \\
 =& \max_{\allp\in \allpset}\mathbb{E}^{\allp}\left[\sum_{t=1}^{T}\allr_t\big(\allstater_t,\allar_t\big)-\lambda_t|\allar_t|\right] + m\sum_{t=1}^{T}\lambda_t\\
  =& \max_{\allp\in \allpset}\mathbb{E}^{\allp}\left[\sum_{t=1}^{T}\sum_{x=1}^{K}r_t(\substater_{t,x},\subar_{t,x})-\lambda_t\subar_{t,x}\right] + m\sum_{t=1}^{T}\lambda_t \\
 =&\sum_{x=1}^{K}\max_{\subp\in \subpset}\mathbb{E}^{\subp}\left[\sum_{t=1}^{T}r_t(\substater_{t},\subar_{t})-\lambda_t\subar_{t}\right] + m\sum_{t=1}^{T}\lambda_t \\
\end{align*}
\endproof
The first equality is due to linearity of expectation. The second equality is obtained by the definition of $r_t(\cdot,\cdot)$ and $|\cdot|$. The third equality is obtained by the independence of the process under policies in $\allpset$. 
%\hwccomment{The last equality needs more explanation.}
\section{Show $\arg\inf_{\lambdav\in \mathbb{R}^T}\mathbf{P}(\lambdav)$ is non-empty}\label{ap:nonemp}
\begin{proof}
When $\lambdav \geq \mathbf{0}$, $\mathbf{P}(\lambdav)=\sum_x R_x(\lambdav) + m\sum_t\lambda_t \geq 0+0 = 0$. $R_x(\lambdav)$ is bounded below by 0 since a policy of not playing at all gives a total reward of 0. When $\lambdav < \mathbf{0}$, the cost of playing is negative, an optimal policy will always play at all time steps. Hence $\mathbf{P}(\lambdav)\geq m\left(\sum_t(0-\lambda_t)\right)+m\sum_t \lambda_t=0$. For the case in which $\lambdav$ contains both positive and negative entries, writing $\lambdav$ as a convex combination of $\lambdav_1 > 0$ and $\lambdav_2 < 0$ and we have that $P(\lambdav)$ is still bounded below by zero, since $P(\lambdav)$ is convex in $\lambdav$. Hence we can conclude that $\inf_{\lambdav\in \mathbb{R}^T}\mathbb{P}(\lambdav)$ exists (note here we make no claim about whether this infimum is attained by any finite $\lambdav$) and denote this value by $h^*$.

Recall we have assumed in the setup that all the rewards are bounded and non-negative, let $\bar{r}$ be an upper bound for all the reward values. For any $\lambdav$ with $\lambda_t\geq T\bar{r} $, the corresponding optimal policies for a single-arm problem will be not play at time t, for $T\bar{r}$ is at least the maximum reward obtainable by the single-arm problem. Hence $\mathbf{P}(\lambdav) \geq 0 +  mT\bar{r}$. For any $\lambdav\geq \mathbf{0}$, $\mathbf{P} = m\mathbb{E}[\sum_t r_{t,x}(S_{t,x},1)-\lambda_t|s_{1,x}]+m\sum_t \lambda_t=m\mathbb{E}[\sum_t r_{t,x}(S_{t,x},1)|s_{1,x}]$, which is independent of $\lambdav$. Hence the infimum is attained on the set $H=\{\lambdav: \lambda_t\geq \forall t \text{ and } \max_t \lambda_t\leq t\bar{r}\}$. Since $H$ is compact, there exists a $\lambdav^* \in H$ s.t. $\mathbf{P}(\lambdav^*)=h^*$. Hence $\arg\inf_{\lambdav\in \mathbb{R}^T}\mathbf{P}(\lambdav)$ is non-empty.
\end{proof}
\section{Proof the existence of $\pi^{**}$}\label{ap:exist}
The proof uses Theorem 3.6 in \cite{AltmanBook}. 
The setup in \cite{AltmanBook} is different from our problem in the following ways:
\begin{itemize}
    \item it deals with an infinite horizon problem, while we have a finite horizon problem.
    \item it has a discount parameter $\beta$ such that $0<\beta<1$, while we do not have any discount.
    \item the constraint of the original constrained problem is in the form of an inequality, while our constraints are equalities.
\end{itemize}
To be able to apply Theorem 3.6 to our problem, we need to consolidate the differences. Here is how we transform our problem:
\begin{itemize}
    \item To transform our problem to a problem with infinite horizon, we add an absorbing state $q$ such $r(q,\av) = 0$ and $\mathbb{P}(q|\allstate_T,\av)=1$ for all $\av$.
    \item We can add a discount parameter $\beta\in(0,1)$ and multiply each reward at time $t$ by $\frac{1}{\beta^{t}}$, and the value of the original problem stays the same.
    \item \cite{AltmanBook} only uses the fact that $\{\lambdav:\lambdav \geq 0\}$ is convex, and so is $\{\lambdav: \text{no constraints}\}$, so no transformation needed.
\end{itemize}
Apply Theorem 3.6, we get, there exists a $\subp^{**}\in {\subpset}_{M}$ such that
\begin{equation}\label{switch}
Q(\lambdav^*)=\inf_{\lambdav}\sup_{\subp\in \subpset_{D}} Q(\lambdav, \subp)= \sup_{\subp\in \subpset}\inf_{\lambdav} Q(\lambdav,\subp) = \inf_{\lambdav}Q(\lambdav, \subp^{**}).
\end{equation}
Since $\subp^{**}$ attains $Q(\lambdav^*)$, it is optimal. Moreover, it has to satisfy $\mathbb{E}^{\subp^{**}}\left[\sum_{t}\lambda_t*(\subar_t-\frac{m}{K})\Big | \substate_1\right]= 0$, for otherwise there is incentive for $\lambdav$ to go to either positive or negative infinity to attain the infimum. However we know $\mathbf{P}_x(\lambdav^*)$ has finite values since each reward is finite, that forces $\mathbb{E}^{\subp^{**}}[\sum_x a_{t,x}-\frac{m}{K}|s_0]= 0$ for every $t$. 

\section{Proof of  $T*$\lowercase{$\max_{s,a,t}\subr_t(s,a)$} upper bounds \lowercase{$\beta_t(s)$}}\label{ap:upbd}
It is sufficient to show that for any $\lambdav$ with $\lambda_t > T*\max_{s,a,t}\subr_t(s,a)$, $V^{\lambdav}(s,t)$ is attained by choosing $a=0$. When $t=T$, $r_T(s,1)-\lambda_T<r_T(s,1)-T*\max_{s,a,t}\subr_t(s,a)\leq 0$. On the other hand $r_T(s,0)\geq 0$ as all rewards are non-negative by the setting of our original MDP. Hence it is optimal to choose $a=0$. When $t<T$, $r_t(s,1)-\lambda_t+\sum_{s'\in \substates}P^a(s,s')V^{\lambdav}(s',t+1)< \max_{s,a,t}\subr_t(s,a)-T*\max_{s,a,t}\subr_t(s,a)+(T-t)*\max_{s,a,t}\subr_t(s,a) \leq 0 \leq r_t(s,0)$. Hence it is also optimal to choose $a=0$. Therefore $\beta_t(s)\leq T*\max_{s,a,t}\subr_t(s,a)$ for all $s,t$.

\section{A result that justifies using bisection}\label{ap:bisect}
\begin{lemma}\label{th:index}
If there exists an optimal policy $\subp$ that takes action $\subaction=1$ in state $\substate\in\substates$ at time $t$ for a sub-MDP $(\substates,\subactions,r,\subpr^{\cdot})$, and satisfies $P^{\subp}[S_t=s]>0$, then $\subaction=1$ is strictly optimal in state $\substate$ at time $t$ under a modified sub-MDP $(\substates,\subactions,r',\subpr^{\cdot})$ with $r'_t(\substate,1)>r_t(\substate,1)$, and $r'_t$ equals $r_t$ otherwise.
\end{lemma}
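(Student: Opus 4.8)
The plan is to reduce the claim to a one-step comparison via the backward-induction characterization of optimality, exploiting that raising the single reward entry $\subr_t(\substate,1)$ leaves every value function at times $t+1,\dots,T$ unchanged. I would first set up the value functions $V_{t'}$ of the original sub-MDP $(\substates,\subactions,\subr,\subpr^{\cdot})$ by backward induction and, for each state-time pair, define the one-step lookahead value
$$Q_{t'}(\substate',\subaction)=\subr_{t'}(\substate',\subaction)+\sum_{\substate''\in\substates}\subpr^{\subaction}(\substate',\substate'')V_{t'+1}(\substate''),$$
so that $V_{t'}(\substate')=\max_{\subaction\in\subactions}Q_{t'}(\substate',\subaction)$ and an action is optimal at $(\substate',t')$ precisely when it attains this maximum.

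Next I would show that under the original rewards action $\subaction=1$ is weakly optimal at $(\substate,t)$, i.e.\ $Q_t(\substate,1)\ge Q_t(\substate,0)$; this is the only place the hypothesis $P^{\subp}[S_t=\substate]>0$ enters. Since $\subp$ is optimal and plays $\subaction=1$ at $(\substate,t)$ while visiting $\substate$ at time $t$ with positive probability, the optimality principle forces the chosen action to attain the Bellman maximum there: if instead $Q_t(\substate,1)<Q_t(\substate,0)$, then modifying $\subp$ to play $\subaction=0$ at $(\substate,t)$---which changes neither the law of the trajectory up to time $t$ nor the probability of reaching $\substate$---would strictly raise the expected reward, contradicting optimality.

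Finally I would compare with the modified sub-MDP $(\substates,\subactions,\subr',\subpr^{\cdot})$. Because $\subr'$ differs from $\subr$ only in the entry $\subr_t(\substate,1)$, and that entry enters no value function at time $t+1$ or later, the functions $V_{t+1},\dots,V_T$ coincide under $\subr$ and $\subr'$. Writing $\delta=\subr'_t(\substate,1)-\subr_t(\substate,1)>0$ and letting primes denote quantities in the modified sub-MDP, the lookahead for action $0$ is unchanged while the lookahead for action $1$ increases by exactly $\delta$, whence
$$Q'_t(\substate,1)=Q_t(\substate,1)+\delta>Q_t(\substate,1)\ge Q_t(\substate,0)=Q'_t(\substate,0).$$
Thus $\subaction=1$ is the unique maximizer of the Bellman equation at $(\substate,t)$ under $\subr'$, which is exactly the claimed strict optimality.

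The main obstacle is the weak-optimality step: one must argue cleanly that optimality of $\subp$ together with positive reachability of $(\substate,t)$ forces the action $\subp$ plays there to be Bellman-optimal, including the observation that altering the time-$t$ action does not disturb the probability of reaching $\substate$ at time $t$. The remaining steps are bookkeeping that hinge on the clean decoupling of the time-$t$ reward perturbation from all later value functions.
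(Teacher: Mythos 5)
Your route is genuinely different from the paper's. The paper never touches Bellman equations: it runs a direct four-term policy-value comparison, letting $\pi'$ be a hypothetical optimal policy for the modified reward $r'$ with $\pi'(s,0,t)=1$ and observing $V(\pi,r') > V(\pi,r) \geq V(\pi',r) = V(\pi',r')$, a contradiction (the first strict inequality is where $P^{\pi}[S_t=s]>0$ enters, the last equality holds because $\pi'$ never collects the perturbed reward). Your decomposition—(i) weak Bellman optimality of $a=1$ at $(s,t)$ under $r$, (ii) invariance of $V_{t+1},\dots,V_T$ under the perturbation, (iii) the resulting strict gap $Q'_t(s,1) > Q'_t(s,0)$—is a viable alternative, and in fact delivers a cleaner conclusion (unique Bellman maximizer) that matches how the lemma is used for bisection. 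Steps (ii) and (iii) are correct as written.

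However, your justification of step (i)—the step you yourself flag as the main obstacle—has a genuine flaw. You argue that if $Q_t(s,1) < Q_t(s,0)$, then ``modifying $\pi$ to play $a=0$ at $(s,t)$'' strictly raises the expected reward. If the deviating policy keeps following $\pi$ from time $t+1$ onward, its gain is $P^{\pi}[S_t=s]\left( \left[r_t(s,0)+\sum_{s'}P^0(s,s')V^{\pi}_{t+1}(s')\right] - \left[r_t(s,1)+\sum_{s'}P^1(s,s')V^{\pi}_{t+1}(s')\right]\right)$, which involves the value function of $\pi$, not the optimal value function defining your $Q_t$. States $s'$ with $P^0(s,s')>0$ may be unreachable under $\pi$ (which always plays $1$ at $(s,t)$), and there $\pi$'s continuation can be strictly suboptimal, $V^{\pi}_{t+1}(s') < V_{t+1}(s')$; so this gain can be negative even when $Q_t(s,0) > Q_t(s,1)$, and no contradiction follows. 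The standard repair is to let the deviation play $0$ at $(s,t)$ \emph{and} follow a Bellman-optimal policy at all times after $t$: its value then exceeds $\pi$'s by at least $P^{\pi}[S_t=s]\left(Q_t(s,0) - V^{\pi}_t(s)\right) \geq P^{\pi}[S_t=s]\left(Q_t(s,0)-Q_t(s,1)\right) > 0$, using $V^{\pi}_t(s) = r_t(s,1) + \sum_{s'}P^1(s,s')V^{\pi}_{t+1}(s') \leq Q_t(s,1)$. With that correction your proof goes through.
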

%\begin{proof}{Alternative proof using optimality equality with lots of hand-waves} 
%Let the optimality equation for $(\substates,\subactions,r,\subpr^{\cdot})$ at state $s$ and time $t$ be $V(s,t) = \max\{r_t(s,1)+E(s,1), r_t(s,0)+E(s,0)\}$, where $E(s,a)$ is the expected forward reward conditioning on state $s$ and action $a$. By assumption, $r_t(s,1)+E(s,1)\geq r_t(s,0)+E(s,0)$. 
%For $(\substates,\subactions,r',\subpr^{\cdot})$, the optimality equation is $V'(s,t) = \max\{r_t(s,1)+E(s,1)+\epsilon, r_t(s,0)+E(s,0)\}$. Based on previous argument, $r_t(s,1)+E(s,1)+\epsilon > %r_t(s,0)+E(s,0)$. Hence it is optimal to play.
%\end{proof}
\begin{proof}{Proof of Lemma \ref{th:index}}
We prove by contradiction. Let $V(\pi,r)$ denote the total expected reward obtained by policy $\pi$ with reward function $r$. Assume that there exists an optimal policy $\subp'$ for sub-MDP $(\substates,\subactions,r'(\cdot,\cdot),\subpr^{\cdot}(\cdot,\cdot))$ such that $\subp'(s,0,t)=1$. Since neither $r_t(s,1)$ nor $r'_t(s,1)$ contributes to the total expected reward, $V(\pi',r)=V(\pi',r')$. Let $\subp$ be an optimal policy for $(\substates,\subactions,r(\cdot,\cdot),\subpr^{\cdot}(\cdot,\cdot))$. Then we have $V(\pi,r)\geq V(\pi',r)$. 
On the other hand, $V(\pi,r')$ is greater than $V(\pi,r)$ by $(r'_t(s,1)-r_t(s,1))\mathbb{P}^{\subp}[\substater_t=s]>0$. Hence we get that $V(\subp,r')>V(\subp,r)\geq V(\subp',r)=V(\subp',r')$ contradicting that $\subp'$ is an optimal policy of sub-MDP $(\substates,\subactions,r'(\cdot,\cdot),\subpr^{\cdot}(\cdot,\cdot))$. $\square$
\end{proof}
\section{}\label{ap:eq}
\begin{proof}{Proof of Lemma \ref{th:eq}}
To prove $(1)$, when $\beta_t(s)>\lambda^*_t$, by definition of the index in \eqref{df:index}, there exists an $\epsilon>0$ such that there is a $\pi\in\Pi^{*}(\lambdav^*[\lambda_t^*+\epsilon,t])$ and $\pi(s,1,t)=1$. Recall how we construct set $\Pi^*(\lambdav)$ in Section \ref{subsec:pi}, the value function $V^{\lambdav^*[\lambda^*_t+\epsilon,t]}$ corresponding to sub-MDP $Q(\lambdav^*[\lambda^*_t+\epsilon,t])$ has to satisfy
\begin{equation*}
r_t(s,1)-\lambda^*_t-\epsilon+\sum_{s'\in\substates}V^{\lambdav^*[\lambda^*_t+\epsilon,t]}(s',t+1)P^1(s,s')\geq r_t(\substate,0)+\sum_{s'\in\substates}V^{\lambdav^*[\lambda^*_t+\epsilon,t]}(s',t+1)P^0(s,s').
\end{equation*}
Since $\lambdav^*[\lambda^*_t+\epsilon,t]$ and $\lambdav^*$ share the same elements from the $(t+1)^{th}$ position onwards, $V^{\lambdav^*[\lambda^*_t+\epsilon,t]}(s,t') = V^{\lambdav^*}(s,t')$, for all $\substate\in\substates$ and $t'\geq t+1$. Hence
\begin{equation}\label{fp}
r_t(s,1)-\lambda^*_t+\sum_{s'\in\substates}V^{\lambdav^*}(s',t+1)P^1(s,s')> r_t(\substate,0)+\sum_{s'\in\substates}V^{\lambdav^*}(s',t+1)P^0(s,s').
\end{equation}
Next we consider two separate cases: 1) State $s$ is visited with positive probability under $\subp^{**}$, that is, $P^{\subp^{**}}(\substater_t=\substate)>0$; 2) State $s$ is visited with zero probability, i.e., $P^{\subp^{**}}(\substater_t=\substate)=0$. If 1) $P^{\subp^{**}}(\substater_t=\substate)>0$, since $\subp^{**}$ is an optimal policy for the unconstrained sub-MDP $Q(\lambdav^*)$ in \eqref{dpx}, and $a=1$ attains $\max\{r_t(s,a)-a\lambda_t^*+\sum_{s'\in\substates}P^{a}(s,s')V^{\lambdav^*}(s',t+1)\}$ alone, hence $\subp^{**}(s,1,t)=1$. If 2) $P^{\subp^{**}}(\substater_t=\substate)=0$, we get $\subp^{**}(s,1,t)=1$ directly from the construction of $\subp^{**}$ in \eqref{df:rho}. 

%To prove $(2)$, when $\beta_t(s)<\lambda^*_t$, there exists an $\epsilon>0$ such that there is a $\pi\in\Pi^{*}(\lambdav^*[\lambda_t^*-\epsilon,t])$ and $\pi(s,1,t)=0$. Subsequently
%\begin{equation*}
%r_t(s,1)-(\lambda^*_t-\epsilon)+\sum_{s'\in\substates}V^{\lambdav^*[\lambda^*_t-\epsilon,t]}(s',t+1)P^1(s,s')\leq  r_t(\substate,0)+\sum_{s'\in\substates}V^{\lambdav^*[\lambda^*_t-\epsilon,t]}(s',t+1)P^0(s,s').
%\end{equation*}
%Since $\lambdav^*[\lambda^*_t-\epsilon,t]$ and $\lambdav^*$ share the same elements from the $(t+1)^{th}$ position onwards, $V^{\lambdav^*[\lambda^*_t-\epsilon,t]}(s,t') = V^{\lambdav^*}(s,t')$, for all $\substate\in\substates$ and $t'\geq t+1$. Hence
%\begin{equation}\label{fnp}
%r_t(s,1)-\lambda^*_t+\sum_{s'\in\substates}V^{\lambdav^*}(s',t+1)P^1(s,s')< r_t(\substate,0)+\sum_{s'\in\substates}V^{\lambdav^*}(s',t+1)P^0(s,s').
%\end{equation}
%When \eqref{fnp} holds true, we have $\subp^{**}(s,t,1)=0$, and this result is again stated in Lemma 3. 
Statement (2) can be proven using a similar argument. We therefore skip the proof to avoid redundancy. $\square$
\end{proof}
\section{}\label{ap:p1}
\begin{proof}{Proof of Lemma \ref{th:p1}}
To prove $(1)$, suppose, for the sake of contradiction, that $\subp^{**}(s,1,t) < 1$. By Lemma \ref{th:eq}, we have $\beta_t(s) \leq \lambda^*_t$. Therefore $U_t(s)\cup V_t(s)$ forms a superset to the set of states with indices of at least $\lambda_t^*$. We also know that $\subp^{**}$ takes active action with probability $\alpha$ at time $t$. Hence we can write $\alpha$ as the sum of the probabilities of taking the active action in all states $s'$ with $\subp^{**}(s',1,t)=1$ and the probabilities of taking the active action in all states $s'$ with $0<\subp^{**}(s',1,t)<1$:
\begin{align}
\alpha &= \sum_{s'\in\{s'':\subp^{**}(s'',1,t)=1\}}P_t(s') * 1 + \sum_{s'\in\{s'':0<\subp^{**}(s'',1,t)<1\}}P_t(s') * \subp^{**}(s',1,t) \nonumber\\
& < \sum_{s'\in\{s'':\subp^{**}(s'',1,t)=1\}}P_t(s') + \sum_{s'\in\{s'':0<\subp^{**}(s'',1,t)<1\}}P_t(s').\label{greaterp}
\end{align}
Taking the contrapositives of both statements in Lemma \ref{th:eq}, we get if $0<\subp^{**}(s',1,t)<1$ then $\beta_t(s')=\lambda^*_t$. Hence
\begin{align*}
\eqref{greaterp} = \sum_{s'\in\{s'':\beta_t(s'')\geq 1\}}P_t(s') \leq \sum_{s'\in U_t(s)\cup V_t(s)}P_t(s')\leq \alpha
\end{align*}
We get $\alpha < \alpha$, which is a contradiction, as desired. 

To prove $(2)$, we again use contradiction. Assume $\subp^{**}(s,1,t)>0$; by the contrapositive of the second statement of Lemma \ref{th:eq} we know $\beta_t(s)\geq \lambda^*_t$. Then $U_{t+1}(s)$ is a subset of $\{s':\beta_t(s')>\lambda^*_t\}$, which in turn is a subset of $\{s':\subp^{**}(s',1,t)=1\}$ by Lemma \ref{th:eq}. 
Hence by the fact that $\alpha= \sum_{s'\in\{s'':\subp^{**}(s'',1,t)=1\}}P_t(s') * 1 + \sum_{s'\in\{s'':0<\subp^{**}(s'',1,t)<1\}}P_t(s') * \subp^{**}(s',1,t)$, we must have either 1)
\[\alpha> \sum_{s'\in\{s'':\subp^{**}(s'',1,t)=1\}}P_t(s')\geq \sum_{s'\in U_{t}(s)}P_t(s')\]
when there exists some $s'\in\{s'':0<\subp^{**}(s'',1,t)<1\}$ such that $P_{t}(s')>0$, or 2)
\[\alpha\geq \sum_{s'\in\{s'':\subp^{**}(s'',1,t)=1\}}P_t(s')> \sum_{s'\in U_{t}(s)}P_t(s')\]
otherwise, as we must have that $\subp^{**}(s,1,t)=1$. 
In either case we get $\alpha>\sum_{s'\in U_{t}(s)=1}P_t(s')$, which again forms a contradiction.
$\square$
\end{proof}
\section{}\label{ap:bino}
\begin{lemma}\label{th:bino}
Let $X^{(k)}$ be a sequence of non-negative random variables such that $\lim_{k\rightarrow \infty}\frac{1}{k}X^{(k)}=\gamma$, a.s.. If $Y^{(k)}|X^{(k)}\sim Bin(X^{(k)},p)$, then $\lim_{k\rightarrow \infty}\frac{Y^{(k)}}{k} = \gamma p$, a.s..
\end{lemma}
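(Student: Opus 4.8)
The plan is to prove the statement via the Borel--Cantelli lemma together with a concentration inequality for the binomial conditional on $X^{(k)}$, so that only the marginal law of each pair $(X^{(k)},Y^{(k)})$ enters the deviation estimates. First I would reduce the claim to a statement about fluctuations around the conditional mean. By the triangle inequality,
\[
\left| \frac{Y^{(k)}}{k} - \gamma p \right| \;\le\; \frac{|Y^{(k)} - p X^{(k)}|}{k} \;+\; p\left| \frac{X^{(k)}}{k} - \gamma \right| ,
\]
and since the second term tends to $0$ a.s.\ by hypothesis, it suffices to show $\tfrac{1}{k}|Y^{(k)} - p X^{(k)}| \to 0$ a.s. The degenerate case $\gamma = 0$ needs no work: $0 \le Y^{(k)} \le X^{(k)}$ always, so $0 \le Y^{(k)}/k \le X^{(k)}/k \to 0 = \gamma p$ a.s.\ by squeezing; hence I may assume $\gamma > 0$.

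For the fluctuation term I would fix $\epsilon > 0$ and a constant $C > \gamma$ (say $C = \gamma + 1$, chosen once and for all, independent of $\epsilon$), and set $F_k = \{X^{(k)} \le C k\}$. Then
\[
\Prob\big( |Y^{(k)} - p X^{(k)}| > \epsilon k,\; F_k \big) = \E\Big[ \ind{F_k}\, \Prob\big(|Y^{(k)} - p X^{(k)}| > \epsilon k \mid X^{(k)}\big) \Big] .
\]
Conditional on $X^{(k)}$ the variable $Y^{(k)}$ is a sum of $X^{(k)}$ i.i.d.\ Bernoulli$(p)$ terms, so Hoeffding's inequality gives the conditional bound $2\exp(-2\epsilon^2 k^2 / X^{(k)})$, which on $F_k$ is at most $2\exp(-2\epsilon^2 k / C)$. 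This is summable in $k$, so by Borel--Cantelli the event $B_k^\epsilon := \{|Y^{(k)} - pX^{(k)}| > \epsilon k\} \cap F_k$ occurs for only finitely many $k$, almost surely.

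Finally I would combine this with the pathwise behaviour of $X^{(k)}$. On the probability-one event where $X^{(k)}/k \to \gamma$, the bound $X^{(k)} \le Ck$ (i.e.\ $F_k$) holds for all large $k$; intersecting with the full-probability Borel--Cantelli event shows that, a.s., for all large $k$ we have $F_k$ true and $B_k^\epsilon$ false, which forces $|Y^{(k)} - pX^{(k)}| \le \epsilon k$. Hence $\limsup_k \tfrac{1}{k}|Y^{(k)} - pX^{(k)}| \le \epsilon$ a.s.; taking a countable intersection over $\epsilon = 1/n$ gives $\tfrac{1}{k}|Y^{(k)} - pX^{(k)}| \to 0$ a.s., completing the argument through the reduction above.

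I expect the main obstacle to be that almost-sure convergence of $X^{(k)}/k$ carries no rate, so the tail probabilities $\Prob(X^{(k)} > Ck)$ need not be summable and a single naive application of Borel--Cantelli to $\{|Y^{(k)}/k - \gamma p| > \epsilon\}$ would fail. The resolution is precisely the event split above: Borel--Cantelli is invoked only on the conditional binomial fluctuation restricted to the good event $F_k$ (which makes the Hoeffding exponent uniformly linear in $k$), while the number of trials $X^{(k)}$ is controlled pathwise via the hypothesis rather than through a summation. This also keeps the proof valid for any joint law of the sequence $\{(X^{(k)},Y^{(k)})\}_k$ consistent with the stated conditional distributions, since only marginal deviation probabilities are ever estimated.
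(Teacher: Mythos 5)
Your proof is correct, and it takes a genuinely different route from the paper's. The paper argues pathwise in two cases: when $X^{(k)}\to\infty$ it factors $\frac{Y^{(k)}}{k}=\frac{Y^{(k)}}{X^{(k)}}\cdot\frac{X^{(k)}}{k}$ and asserts $\frac{Y^{(k)}}{X^{(k)}}\to p$ a.s.\ (a law-of-large-numbers step it does not justify), and when $X^{(k)}$ is bounded it notes $\gamma=0$ and squeezes. Your argument --- reducing to $\frac{1}{k}\lvert Y^{(k)}-pX^{(k)}\rvert\to 0$, applying Hoeffding conditionally on $X^{(k)}$, restricting to the truncation event $F_k=\{X^{(k)}\le Ck\}$ so that the exponent is uniformly linear in $k$, and invoking Borel--Cantelli together with the pathwise fact that $F_k$ holds eventually --- is more careful on precisely the point the paper glosses over: since the lemma specifies only the conditional law of each $Y^{(k)}$ given $X^{(k)}$, and nothing about the joint law of the sequence across $k$, the claim $Y^{(k)}/X^{(k)}\to p$ a.s.\ does not follow from the strong law of large numbers without either constructing a coupling (e.g., a common Bernoulli array from which all the $Y^{(k)}$ are read off) or doing a concentration-plus-Borel--Cantelli estimate, which is exactly what you do. Consequently your proof is valid for any joint law consistent with the stated conditionals, whereas the paper's factoring argument is shorter but leaves that step as a gap; note also that the paper's dichotomy (``$X^{(k)}\to\infty$'' versus ``$X^{(k)}$ bounded'') is not exhaustive when $\gamma=0$, a corner case your separate squeezing argument for $\gamma=0$ handles cleanly.
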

\begin{proof}{Proof of Lemma \ref{th:bino}}
We consider two cases: 1) $X^{(k)}\rightarrow\infty$; 2) $X^{(K)}$ is bounded. When 1) $X^{(k)}\rightarrow\infty$, we have
\begin{align*}
&\lim_{k\rightarrow\infty}\frac{Y^{(k)}}{k}\\
=&\lim_{k\rightarrow\infty}\frac{Y^{(k)}}{X^{(k)}}\frac{X^{(k)}}{k}\\
=&\lim_{k\rightarrow\infty}\frac{Y^{(k)}}{X^{(k)}}\lim_{k\rightarrow\infty}\frac{X^{(k)}}{k}\\
	=&p*\gamma
\end{align*}
If 2) $X^{(K)}$ is bounded, then $\gamma = 0$. $Y^{(k)}$ is also bounded, so $\lim_{k\rightarrow \infty}=0$.
$\square$
\end{proof}
\section{}\label{ap:movein}
\begin{proof}{Proof of Lemma \ref{th:movein}}
For readability, we write out $f_s(\frac{\Nv_{t+1}}{K},\frac{\lfloor\alpha K\rfloor}{K})$ as follows:
\begin{align}
f_s(\frac{\Nv_{t+1}}{K},\frac{\lfloor\alpha K\rfloor}{K}) =& \mathbbm{1}(\underbrace{[\frac{\lfloor\alpha K\rfloor}{K}-\sum_{s'\in U_{t+1}(s)}\frac{N_{t+1}(s')}{K}]^+ - \sum_{s'\in V_{t+1}(s)}\frac{N_{t+1}(s')}{K}}_{f_1(\frac{\Nv_{t+1}}{K},\frac{\lfloor\alpha K\rfloor}{K})}\geq 0)
*\frac{N_{t+1}(s)}{K} \nonumber\\
&+\mathbbm{1}([\frac{\lfloor\alpha K\rfloor}{K}-\sum_{s'\in U_{t+1}(s)}\frac{N_{t+1}(s')}{K}]^+ -\sum_{s'\in V_{t+1}(s)}\frac{N_{t+1}(s')}{K}<0)*\nonumber\\
&\hspace{4mm}\underbrace{\mathbbm{1}(\frac{\lfloor\alpha K\rfloor}{K}-\sum_{s'\in U_{t+1}(s)}\frac{N_{t+1}(s')}{K}>0)b_s(\frac{\Nv_{t+1}}{K},\frac{\lfloor\alpha K\rfloor}{K})}_{f_2(\frac{\Nv_{t+1}}{K},\frac{\lfloor\alpha K\rfloor}{K})}.
\end{align}
The convergence is based on that $\frac{\Nv_{t+1}}{K} \rightarrow P_{t+1}(s)$, a.s., by the induction step, and $\frac{\lfloor\alpha K\rfloor}{K}\rightarrow \alpha$.
We discuss by three cases:
\begin{itemize}
\item When $f_1(\Pv_{t+1}(s),\alpha)>0$,\\
by continuity of $f_1$ and the fact that $\frac{\Nv_{t+1}}{K} \rightarrow P_{t+1}(s)$, a.s., for almost any realization $\omega$ of the sequence $\{\frac{\Nv_{t+1}}{K}\}_K$, we can find a constant $K_0(\omega)$ such that $\forall K \geq K_0(\omega)$, $|f_1(\frac{\Nv_{t+1}(\omega)}{K},\frac{\lfloor\alpha K\rfloor}{K})-f_1(P_{t+1}(s),\alpha)|\leq \frac{f_1(P_{t+1}(s),\alpha)}{2}$.  
Hence $f_1(\frac{\Nv_{t+1}(\omega)}{K},\frac{\lfloor\alpha K\rfloor}{K}) > 0$, and $f(\frac{\Nv_{t+1}(\omega)}{K},\frac{\lfloor\alpha K\rfloor}{K}) = \frac{N_{t+1}(s)(\omega)}{K}$. As $K\rightarrow\infty$, $f_s(\frac{\Nv_{t+1}(\omega)}{K},\frac{\lfloor\alpha K\rfloor}{K})$ goes to $P_{t+1}(s)$. Since this holds true for almost every $\omega$, we have $f_s(\frac{\Nv_{t+1}}{K},\frac{\lfloor\alpha K\rfloor}{K})$ goes to $P_{t+1}(s) = f_s(\Pv_{t+1}(s),\alpha)$ almost surely. 
\item When $f_1(P_{t+1}(s),\alpha)<0$,\\
We first claim: for $K$ large enough, $H_1,H_2\in\{0,1\}$. To justify, first we look at the value of $b_s(\frac{\Nv_{t+1}}{K},\frac{\lfloor\alpha K\rfloor}{K},\frac{H_1}{K})$ when $\sum_{s'\in V_{t+1}(s)}\rho(s',1,t+1)>0$.  
As $K$ tends to infinity, the first min function in $b_s(\frac{\Nv_{t+1}}{K},\frac{\lfloor\alpha K\rfloor}{K})$, that is,  $$\frac{1}{K}\Big\lfloor (\lfloor\alpha K\rfloor-\sum_{s'\in U_{t+1}(s)}N_{t+1}(s'))\frac{\rho(s,1,t+1)}{\sum_{s'\in V_{t+1}(s)}\rho(s',1,t+1)}\Big\rfloor,$$ 
goes to $$ (\alpha -\sum_{s'\in U_{t+1}(s)}P_{t+1}(s'))\frac{\rho(s,1,t+1)}{\sum_{s'\in V_{t+1}(s)}\rho(s',1,t+1)},$$
 while the second term $\frac{N_{t+1}(s)}{K}$ goes to $P_{t+1}(s)$. 
 By assumption that $[\alpha -\sum_{s'\in U_{t+1}(s)}P_{t+1}(s')]^+ < \sum_{s'\in V_{t+1}(s)}P_{t+1}(s')$, we have
\begin{align*}
P_{t+1}(s) & \geq P_{t+1}(s)\pi^{**}(s,1,t+1)\\
&\geq (\alpha -\sum_{s'\in U_{t+1}(s)}P_{t+1}(s'))^+\frac{P_{t+1}(s)\pi^{**}(s,1,t+1)}{\sum_{s'\in V_{t+1}(s)}P_{t+1}(s')\pi^{**}(s',1,t+1)}\\
&=(\alpha -\sum_{s'\in U_{t+1}(s)}P_{t+1}(s'))^+\frac{\rho(s,1,t+1)}{\sum_{s'\in V_{t+1}(s)}\rho(s',1,t+1)},
\end{align*}
for all $s\in V_{t+1}(s)$. Therefore when $K$ is sufficiently large, we get
$$\frac{1}{K}\Big\lfloor (\lfloor\alpha K\rfloor-\sum_{s'\in U_{t+1}(s)}N_{t+1}(s'))^+\frac{\rho(s,1,t+1)}{\sum_{s'\in V_{t+1}(s)}\rho(s',1,t+1)}\Big\rfloor\leq\frac{N_{t+1}(s)}{K},
$$ 
for all $s\in V_{t+1}(s)$, and this satisfies the assumption in Remark \ref{remark:rounding}. Therefore we know that for sufficiently large $K$, $H_1$ takes value in $\{0,1\}$. 
When $\sum_{s'\in V_{t+1}(s)}\rho(s',1,t+1)=0$, we always have $[\lfloor\alpha K\rfloor-\sum_{s'\in U_{t+1}(s)}N_{t+1}(s')]^+*\frac{N_{t+1}(s)}{\sum_{s'\in V_{t+1}(s)}N_{t+1}(s')} \leq N_{t+1}(s)$,for all $s\in V_{t+1}(s)$, which satisfies the assumption in Remark \ref{remark:rounding}, hence $H_2\in\{0,1\}$. 

The rest of the argument to prove that $f_s(\frac{\Nv_{t+1}}{K},\frac{\lfloor\alpha K\rfloor}{K})$ goes to $ f_s(\Pv_{t+1}(s),\alpha)$ almost surely follows the same as the first case.
\item When $f_1(\Pv_{t+1},\alpha)=0$,\\
we claim that $P_{t+1}(s) = f_2(\Pv_{t+1},\alpha)$. To justify, we consider two cases:
\begin{enumerate}
\item $\alpha-\sum_{s'\in U_{t+1}(s)}P_{t+1}(s')\leq 0$.\\ Then we have $\sum_{s'\in V_{t+1}(s)}P_{t+1}(s') = 0$, which implies $P_{t+1}(s)=0$. 
$f_2(\Pv_{t+1},\alpha)$ is also zero since $\mathbbm{1}(\alpha -\sum_{s'\in V_{t+1}(s)}P_{t+1}(s')>0)=0$.
\item $\alpha-\sum_{s'\in U_{t+1}(s)}P_{t+1}(s')>0$.\\ When $\sum_{s'\in V_{t+1}(s)}\rho(s',1,t+1)=0$, the value of the second term is $P_{t+1}(s)$, since $(\alpha -\sum_{s'\in U_{t+1}(s)}P_{t+1}(s'))^+$ cancels with the denominator $\sum_{s'\in V_{t+1}(s)}P_{t+1}(s')$ by assumption. When $\sum_{s'\in V_{t+1}(s)}\rho(s',1,t+1)>0$, since we have $\alpha-\sum_{s'\in U_{t+1}(s)}P_{t+1}(s') - \sum_{s'\in V_{t+1}(s)}P_{t+1}(s') = 0$, by Lemma \ref{th:p1}, we have $\subp^{**}(s',a,t)=1$. Hence the term associated with the second indicator is again $P_{t+1}(s)$, which is the same as the term associted with the first indicator. (Recall $\rho(s,1,t)=\subp^{**}(s,1,t)*P_{t+1}(s)$.) 
 \end{enumerate}
 Therefore we have shown our claim.
 
 For any realization $\omega$ of $\{\frac{\Nv_{t+1}}{K}\}_K$, we define sequences $sq_1(\omega) = \{\frac{\Nv_{t+1}(\omega)}{K}:K=1,2,..., f_1(\frac{\Nv_{t+1}(\omega)}{K},\frac{\lfloor\alpha K\rfloor}{K})\geq 0\}$, and $sq_2(\omega) = \{\frac{\Nv_{t+1}(\omega)}{K}:K=1,2,..., f_1(\frac{\Nv_{t+1}(\omega)}{K},\frac{\lfloor\alpha K\rfloor}{K})< 0\}$.
  Then we have $\{f_2(\frac{\Nv_{t+1}(\omega)}{K},\frac{\lfloor\alpha K\rfloor}{K}):	\frac{\Nv_{t+1}(\omega)}{K}\in sq_2(\omega)\}$ and $sq_1(\omega)$ converge to the same value. Hence $f_s(\frac{\Nv_{t+1}(\omega)}{K},\frac{\lfloor\alpha K\rfloor}{K})$ goes to $P_{t+1}(s)$, and subsequently $f_s(\frac{\Nv_{t+1}}{K},\frac{\lfloor\alpha K\rfloor}{K})$ goes to $P_{t+1}(s) = f_s(\Pv_{t+1}(s),\alpha)$ almost surely.   $\square$
\end{itemize}
\end{proof}

\section{}\label{ap:equiv}
\begin{proof}{Proof of Lemma \ref{th:equiv}}
We divide our discussion into two main cases.
\begin{itemize}
\item 
When
\begin{equation}\label{eq:equiv_case1}
[\alpha-\sum_{s'\in U_t(s)}P_t(s')]^+ - \sum_{s'\in V_t(s)}P_t(s')\geq 0,
\end{equation} 
if $\alpha-\sum_{s'\in U_t(s)}P_t(s')\leq 0$, then $g_s(\Pv_t)=0$ in both cases. Moreover, \eqref{eq:equiv_case1} implies $P_t(s)=0$ by Lemma 4, hence $f_s(\Pv_t,\alpha)=0$.

If $\alpha-\sum_{s'\in U_t(s)}P_t(s')> 0$, \eqref{eq:equiv_case1} implies that $P_t(s)=1$ by Lemma 4.
$$\alpha-\sum_{s'\in U_t(s)}P_t(s')\geq \sum_{s'\in V_t(s)}P_t(s') = \sum_{s'\in V_t(s)}\rho(s',1,t) \geq P_t(s).$$
Hence $P_t(s)$ attains the minimum in both cases of $g_s(\Pv_t)$, and $f_s(\Pv_t,\alpha)=P_t(s) = g_s(\Pv_t)$.
\item When 
\begin{equation}\label{eq:equiv_case2}
[\alpha-\sum_{s'\in U_t(s)}P_t(s')]^+ - \sum_{s'\in V_t(s)}P_t(s') < 0,
\end{equation} 
if $\alpha-\sum_{s'\in U_t(s)}P_t(s')\leq 0$, then both $f_s(\Pv_t,\alpha)$ and $ g_s(\Pv_t)$ are zero.

If $\alpha-\sum_{s'\in U_t(s)}P_t(s')>0$, $f_s(\Pv_t,\alpha)$ is either $\min\{(\alpha-\sum_{s'\in U_t(s)}P_t(s'))\frac{\rho(s,1,t)}{\sum_{s'\in V_t(s)\rho(s',1,t)}},P_t(s)\}$
 or $\min\{(\alpha-\sum_{s'\in U_t(s)}P_t(s'))\frac{P_t(s)}{\sum_{s'\in V_t(s)P_t(s')}},P_t(s)\}$ 
 depending on whether $\sum_{s'\in V_t(s)}\rho(s',1,t)$ is greater than or equal to zero, which matches exactly the two cases in $g_s(\Pv_t)$.
\end{itemize}
\end{proof}
\section{}\label{ap:ppi}
\begin{proof}{Proof of Lemma \ref{th:ppi}}
We first consider the case when $\sum_{s'\in V_{t}(s)}\rho(s',1,t)>0$. This can be further divided into two sub-cases,
\begin{itemize}
\item Case 1: When $g_s(\Pv_t) = P_t(s)$,\\
if $\alpha-\sum_{s'\in U_t(s)}P_t(s')\leq 0$, by Lemma \ref{th:p1}, $\pi^{**}(s,1,t)=0$. Since $P_t(s)$ attains the minimum in this case, $P_t(s)=0$. Hence $g_s(\Pv_t)=0=P_t(s)\pi^{**}(s,1,t)$.

If $\alpha-\sum_{s'\in U_t(s)}P_t(s')> 0$, this leads to two cases by Lemma \ref{th:p1}: $\pi^{**}(s,1,t)=1$ or $0<\pi^{**}(s,1,t)<1$. If it is the latter, we have the second term in $g_s(\Pv_t)$ becomes $\rho(s,1,t)$, since $\alpha-\sum_{s'\in U_t(s)}P_t(s')$ cancels with the denominator. $\rho(s,1,t)=P_t(s)\pi^{**}(s,1,t)<P_t(s)$, which contradicts that $P_t(s)$ attains the minimum. Hence $\pi^{**}(s,1,t) = 1$. We have $g_s(\Pv_t) = P_t(s) = P_t(s)*\pi^{**}(s,1,t)$.
\item Case 2: When $g_s(\Pv_t) = [\alpha-\sum_{s'\in U_t(s)}P_t(s')]^+\frac{\rho(s,1,t)}{\sum_{s'\in V_t(s)\rho(s',1,t)}}$,\\
if $\alpha-\sum_{s'\in U_t(s)}P_t(s') \leq 0$, again by Lemma \ref{th:p1}, $\pi^{**}(s,1,t)=0$, $g_s(\Pv_t) = 0 = P_t(s)\pi^{**}(s,1,t)$.

If $\alpha-\sum_{s'\in U_t(s)}P_t(s') > 0$, again we have two cases: $\pi^{**}(s,1,t)=1$ or $0<\pi^{**}(s,1,t)<1$. If it is the latter, we have $\alpha-\sum_{s'\in U_t(s)}P_t(s')=\sum_{s'\in V_t(s)}P_t(s')$. 
%\hwccomment{will justify later.} 
If it is the former, by assumption $[\alpha-\sum_{s'\in U_t(s)}P_t(s')]\frac{P_t(s)}{\sum_{s'\in V_t(s)P_t(s')}}$ attains the minimum, $\alpha-\sum_{s'\in U_t(s)}P_t(s') \leq \sum_{s'\in V_t(s)}P_t(s')$. Hence $\alpha-\sum_{s'\in U_t(s)}P_t(s')$ can only be equal to $\sum_{s'\in V_t(s)}P_t(s')$. Subsequently for both cases  $\pi^{**}(s,1,t)=1$ and $0<\pi^{**}(s,1,t)<1$, we have $g_s(\Pv_t) = \rho(s,1,t)=P_t(s)\pi^{**}(s,1,t)$.
\end{itemize}

Now we look at the case when $\sum_{s'\in V_t(s)}\rho(s',1,t)=0$. We have either 1) $P_t(s')=0$ for all $s'\in V_t(s)$ 
%\hwccomment{don't know what to do here, $g_s(\Pv_t)$ is not defined...maybe need to add a third case for $g_s(\Pv_t)$, which is $g_s(\Pv_t)=0$ if $\sum_{s'\in V_t(s)}P_t(s')=0$.} 
2) $\pi^{**}(s',1,t)=0$ for all $s'\in V_t(s)$. 

When $\pi^{**}(s',1,t)=0$ for all $s'\in V_t(s)$, $\alpha-\sum_{s'\in U_t(s)}P_t(s')\leq 0$
%\hwccomment{need to justify later}, 
hence $g_s(\Pv_t) = 0 = P_t(s)\pi^{**}(s,1,t)$.  
\end{proof}

\section*{Acknowledgements}
We thank NSF and AFOSR for funding this project.

\bibliographystyle{imsart-nameyear.bst}
\bibliography{myRef.bib} % if more than one, comma separated
\end{document}